\newcommand\gobblepars{%
    \@ifnextchar\par%
 {\expandafter\gobblepars\@gobble}%
{}}
\def\whampar#1{\smallbreak\pagebreak[3]%
\noindent\textbf{#1} \gobblepars  \ \
\nopagebreak[4]%
}
\def\whamrm#1{\smallbreak\pagebreak[3]%
\noindent\text{\rm#1}\ \ \gobblepars}
\def\whamit#1{\smallbreak\pagebreak[3]%
\noindent\textit{#1}\ \ \gobblepars}
\def\wham#1{\smallbreak\pagebreak[3]%
\noindent\textbf{#1}\ \ \gobblepars}
\def\whamb{\wham{$\bullet$}}
\newcounter{rmnum}
\newenvironment{romannum}{\begin{list}{{\upshape (\roman{rmnum})}}{\usecounter{rmnum}
\setlength{\leftmargin}{2pt}
\setlength{\rightmargin}{4pt}
\setlength{\itemsep}{1pt}
\setlength{\itemindent}{5pt}
}}{\end{list}}
\newcounter{anum}
\def\tqsaprobe{\hbox{\scriptsize$\upxi$}}
\def\Sigmaqsa{\Sigma_{\tqsaprobe}}
\def\Deltaf{\gamma_f}
\def\tilDeltaf{\widetilde\gamma_f}
\def\Deltabarf{\bar\gamma_f}
\def\state{{\sf X}}
\def\MC{\text{\MC}}
\def\transpose{\intercal}
\def\limsup{\mathop{\rm lim\ sup}}
\def\argmin{\mathop{\rm arg\, min}}
\newcommand{\field}[1]{\mathbb{#1}}
\def\Re{\field{R}}
\def\bdd#1{b^{\text{\rm\tiny\ref{#1}}}}
\def\varble{\,\cdot\,}
\def\epsy{\varepsilon}
\def\eqdef{\mathbin{:=}}
\def\zsigma{\varsigma}
\def\ind{\mathbbm{1}}  
\newtheorem{theorem}{Theorem}[section]
\newtheorem{proposition}[theorem]{Proposition}
\newtheorem{lemma}[theorem]{Lemma}
 \def\sbullet{{\scalebox{0.75}{\textbullet}}}
\Crefname{corollary}{Corollary}{Corollaries}
\Crefname{eqnarray}{eq.}{eqs.}
\Crefname{equation}{eq.}{eqs.}
\def\ctr{\text{\tiny{\sf ctr}}}
\def\opt{\text{\tiny\sf opt}}
\Crefname{figure}{Fig.}{Figs.}
\Crefname{tabular}{Tab.}{Tabs.}
\Crefname{table}{Tab.}{Tabs.}
\Crefname{lemma}{Lemma}{Lemmas}
\Crefname{theorem}{Thm.}{Thms.}
\Crefname{definition}{Definition}{Definitions}
\Crefname{section}{Section}{Sections}
\Crefname{proposition}{Prop.}{Propositions}
\Crefname{assumption}{Assumption}{Assumptions}
\Crefname{example}{Example}{Examples}
\def\trace{\hbox{\rm trace\,}}  
\def\bfmW{\bfmath{W}}
\def\tilpi{{\tilde \pi}}
\def\tilg{\tilde g}
\DeclareFontFamily{U}{mathx}{\hyphenchar\font45}
\DeclareFontShape{U}{mathx}{m}{n}{<-> mathx10}{}
\DeclareSymbolFont{mathx}{U}{mathx}{m}{n}
\DeclareMathAccent{\widebar}{0}{mathx}{"73}
\def\bfmath#1{{\mathchoice{\mbox{\boldmath$#1$}}%
{\mbox{\boldmath$#1$}}%
{\mbox{\boldmath$\scriptstyle#1$}}%
{\mbox{\boldmath$\scriptscriptstyle#1$}}}}
\def\bfmX{\bfmath{X}}
\def\bfPhi{\bfmath{\Phi}}
\def\nabobj{\raisebox{.1em}{\scalebox{0.75}{\scriptsize$\nabla\Obj$}}}
 \def\FRAC#1#2#3{\genfrac{}{}{}{#1}{#2}{#3}}
\def\ddt{{\mathchoice{\FRAC{1}{d}{dt}}%
{\FRAC{1}{d}{dt}}%
{\FRAC{3}{d}{dt}}%
{\FRAC{3}{d}{dt}}}}
\def\half{{\mathchoice{\FRAC{1}{1}{2}}%
{\FRAC{1}{1}{2}}%
{\FRAC{3}{1}{2}}%
{\FRAC{3}{1}{2}}}}
\def\clE{{\cal E}}
\def\clG{{\cal G}}
\def\clM{{\cal M}}
\def\clW{{\cal W}}
\def\cdG{c_{{\text{\lower1pt\hbox{d}}}} }
\title{Global Convergence and Acceleration for
	\\
	Single Observation Gradient Free Optimization}
\author{Caio Kalil Lauand\thanks{Caio Kalil Lauand  is with the Division of Systems Engineering, Boston University, Boston, MA 02215, USA.
		Email: {\tt\small cklauand@bu.edu
		}
	}
	\and
	Sean Meyn\thanks{Sean Meyn is with the Department of Electrical and Computer Engineering, University of Florida, Gainesville, FL 32611, USA.
		Email: {\tt\small meyn@ufl.edu}
		\\
		Financial support from  
		ARO award
		W911NF2410389
		and NSF award  CCF 2306023
		is gratefully acknowledged.
	}
}
\begin{document}

\def\whampar#1{\smallbreak\pagebreak[3]%
\noindent\textbf{#1} \gobblepars  \ \
\nopagebreak[4]%
}

\newcommand*{\QED}{\hfill\ensuremath{\blacksquare}}
\def\qedIEEE{\nobreak\hspace*{\fill}~\QED\par \unskip }

\def\ProofOf#1{\whamit{#1}}



\def\tilnabla{{\widetilde{\nabla}\!}}
%


\newcommand{\qsaprobe}{{\scalebox{1.1}{$\upxi$}}}  
\newcommand{\bfqsaprobe}{{\scalebox{1.1}{$\bm{\upxi}$}}}  
\newcommand{\qsaprobeTwo}{\qsaprobe^\bullet}  %

\def\SAtime{\uptau}

\def\Lip{L}  
 
\def\Obj{\Upgamma}  
\def\barObj{\bar{\Obj}}
 
\def\Evo{\mathcal{X}} 


 
 
\def\barell{{\overline {\ell}}}

\def\bara{{\overline {a}}}
\def\barb{{\overline {b}}}
\def\barc{{\overline {c}}}
\def\bare{\bar{e}}

\def\barf{{\widebar{f}}}
\def\barfzap{\bar{f}^{\textup{\textsf{zap}}}}

\def\barg{{\widebar{g}}}

   \def\ubarc{\underline{c}}
   \def\ubarw{\underline{w}}
   \def\ubary{\underline{y}}

\def\barclE{\bar{\clE}}
\def\barcx{{\barc}_{\hbox{\it\tiny X}}}

\def\barcy{{\barc}_{\hbox{\it\tiny Y}}}

\def\bard{{\overline {d}}}

\def\barf{{\widebar{f}}}
\def\barg{{\widebar{g}}}

\def\barh{{\overline {h}}}
\def\bark{{\overline {k}}}
\def\barI{{\overline {l}}}
\def\barm{{\overline {m}}}
\def\barn{{\overline {n}}}

\def\barp{{\overline {p}}}
\def\barq{{\overline {q}}}
\def\barr{{\overline {r}}}
\def\bars{{\overline {s}}}
\def\barv{{\overline {v}}}
\def\barw{{\overline {w}}}
\def\barx{{\overline {x}}}
\def\bary{{\overline {y}}}

\def\barA{{\bar{A}}}
\def\barB{{\bar{B}}}
\def\barC{{\bar{C}}}
\def\barD{{\bar{D}}}
\def\barE{{\bar{E}}}
\def\barF{{\bar{F}}}
\def\barH{{\bar{H}}}
\def\barJ{{\bar{J}}}
\def\barL{{\bar{L}}}
\def\barP{{\bar{P}}}
\def\barQ{{\bar{Q}}}
\def\barR{{\bar{R}}}
\def\barS{{\bar{S}}}
\def\barT{{\bar{T}}}
\def\barU{{\bar{U}}}
\def\barX{{\bar{X}}}
\def\barY{{\bar{Y}}}
\def\barZ{{\bar{Z}}}
 
 \def\bx{{{\cal B}(\state)}}

\def \barzeta{{\overline{\zeta}}}

\def\baratom{{\overline{\atom}}}

\def\bareta{{\overline{\eta}}}
\def\barho{{\overline{\rho}}}
\def\barmu{{\overline{\mu}}}
\def\barnu{{\overline{\nu}}}

\def\bartheta{{\overline{\theta}}}

\def\barTheta{{\overline{\Theta}}}

\def\barpsi{{\bar{\psi}}}

\def\barnabla{\bar{\nabla}}

\def\barlambda{\bar \lambda}

\def\tilh{\tilde{h}}
\def\baralpha{{\bar{\alpha}}}
\def\barbeta{{\bar{\beta}}}

\def\barepsy{{\bar{\epsy}}}
 
\def\barbP{\overline{\bfPhi}}
\def\barPhi{\overline{\Phi}}
 \def\barsigma{\overline{\sigma}}

\def\barSigma{\overline{\Sigma}}
\def\barDelta{\overline{\Delta}}
\def\barGamma{\overline{\Gamma}}
\def\barLambda{\overline{\Lambda}}

\def\bartau{\overline{\tau}}


\def\haPsi{{\widehat{\Psi}}}
\def\haGamma{{\widehat{\Gamma}}}
\def\haSigma{{\widehat{\Sigma}}}
\def\habfPhi{{\widehat{\bfPhi}}}

\def\haPi{{\widehat \Pi}}

\def\haP{{\widehat P}}
\def\haZ{{\widehat Z}}

\def\haf{{\hat f}}
\def\hag{{\hat g}}
\def\hah{{\hat h}}
\def\ham{{\hat m}}
\def\hay{{\widehat y}}
\def\hav{\hat v}
\def\hatheta{{\hat\theta}}
\def\hapsi{{\hat\psi}}
\def\hanu{{\hat\nu}}
\def\hamu{{\hat\mu}}
\def\halpha{{\hat\alpha}}
\def\habeta{{\hat\beta}}
\def\hagamma{{\hat\gamma}}

\def\haphi{\hat{\phi}}

\def\hanabla{\widehat{\nabla}}

\def\halambda{{\hat\lambda}}
\def\haLambda{{\hat\Lambda}}

\def\haA{\widehat A}
\def\haK{\widehat K}

\def\haT{\widehat T}
\def\haX{\widehat X}

\def\barfalpha{\bar{f}_{\!\alpha}}
\def\Aalpha{\derbarf_{\!\alpha}}

\def\Athree{\text{\rm(A3${}^\circ$)}}
\def\AthreeV{\text{\rm(A3${}^\bullet$)}}
\def\derbarf{\bar{A}} 


\def\BelErr{\mathcal{B}}
\def\ocp{*} 


 \def\Rtheta{R^\tTheta}

\def\REXP{R^\tEXP}

 \def\bEXP{b^\tEXP} 
 \def\btheta{b^\tTheta}

\def\SigmaCLT{\Sigma_{\text{\tiny \sf CLT}}}

\newlength{\noteWidth}
\setlength{\noteWidth}{.75in}
\long\def\notes#1{\ifinner
{\footnotesize #1}
\else 
\marginpar{\parbox[t]{\noteWidth}{\raggedright\tiny#1}}  
\fi\typeout{#1}}


\def\psisub#1{\psi_{(#1)}}
\def\upsisub#1{\underline{\psi}_{(#1)}}
\def\tilpsisub#1{{\widetilde \psi}_{(#1)}}

\def\csub#1{c_{#1}}
\def\tilcsub#1{{\tilde c}_{#1}}


\def\EXPs{N}
\def\EXP{\mathcal{W}}

\def\tEXP{{\text{\tiny$\EXP$}}}

 \def\feeUnique{\mathcal{C}^\tTheta}

 \def\ctheta{c^\tTheta}
\def\cEXP{c^\tEXP}

\def\tERM{{\text{\tiny ERM}}}

\def\density{p}

\def\RateFn{\bar{I}}

\def\diffDPT{D}

\def\Amap{a}

\def\BE{{\cal B}}
\def\maxBE{\overline{\cal B}}

\def\Tdiff{\mathcal{D}}
\def\barTdiff{\bar{\mathcal{D}}}
\def\tilTdiff{\widetilde{\mathcal{D}}}

\def\thetaPR{\theta^{\text{\tiny\sf  PR}}}
\def\tilthetaPR{\tilde{\theta}^{\text{\tiny\sf  PR}}}

\def\lambdamin{\lambda_{\textup{\rm\tiny min}}}
\def\lambdamax{\lambda_{\textup{\rm\tiny max}}}

\def\preODEstate{\Uppsi} 
\def\bfpreODEstate{\bm{\preODEstate}}
\def\barpreODEstate{\bar{\preODEstate}}
\def\tilpreODEstate{\tilde{\preODEstate}}
\def\hatpreODEstate{\widehat{\Upxi}}
\def\bfhatpreODEstate{\hat{\bfpreODEstate}}

\def\ODEstate{\Uptheta} 
\def\bfODEstate{\bm{\Uptheta}}
\def\barODEstate{\widebar{\Uptheta}}
\def\tilODEstate{\widetilde{\Uptheta}}
\def\haODEstate{\widehat{\Uptheta}}
\def\bfhaODEstate{\widehat{\bfODEstate}}

\def\ODEstatePR{\ODEstate^{\text{\tiny\sf PR}}}
\def\ODEstatePRF{\ODEstate^{\text{\tiny\sf PRF}}}
\def\ODEstatePRm{\ODEstate^{\text{\tiny\sf PR$-$}}}

\def\odestate{\upvartheta}
\def\bfodestate{\bm{\upvartheta}}

\def\Nsam{N}

\def\belief{\text{\large$\beta$}}

\def\fee{\upphi}
\def\feex{\widetilde{\fee}}

 \def\feeF{\phi^{\hbox{\tiny \rm F}}}
 \def\feeD{\phi^{\hbox{\tiny \rm D}}}
 
\def\Hor{\mathcal{T}} 
\def\hor{\uptau}   

\def\Qstar{Q^\star}
\def\Hstar{H^\star}

\def\bfpi{\bfmath{\uppi}} 
\def\cpi{\check{\uppi}}
\def\tilpi{{\tilde \uppi}}

\def\piM{\uppi^{\hbox{\tiny \rm M}}}

\def\rglCLT{\rangle_{\text{\tiny \sf CLT}}}
\def\rglLt{\rangle_{L_2}}

\def\prpi{\upmu}


\def\hatDelta{\widehat \Delta}
\def\MD{\clW}
\def\Mart{\clM}

\def\elig{\zeta}
\def\bfelig{\bfmath{\zeta}}

\def\uc{\underline{c}}

\def\uh{\underline{h}}
\def\uq{\underline{q}}

\def\uH{\underline{H}}
\def\uQ{\underline{Q}}
\def\uQstar{\underline{Q}^\star}

\def\utheta{\underline{\theta}}  
  
\def\discRate{\Gamma}
\def\disc{\gamma}

\def\stepf{\beta}


\def\scerrorSymbol{Z}
  
\def\scerror#1#2{\scerrorSymbol_{#1}^{(#2)}}

\def\scerrorpull#1#2{\scerror{#1 \mid \mydash}{#2}}

\def\sclim{X}

\def\cz{\check{z}}
\def\tilz{\tilde{z}}

\def\barfinf{\barf_{\infty}}
\def\SAtime{\uptau}
\def\SAfn{s}

\def\tTheta{{\text{\tiny$\Theta$}}}

\def\SigmaTheta{\Sigma_\tTheta}

\def\thetaPR{\theta^{\text{\tiny\sf  PR}}}
\def\tilthetaPR{\tilde{\theta}^{\text{\tiny\sf  PR}}}
\def\SigmaPR{\Sigma^{\text{\tiny\sf PR}}_\tTheta}


\def\bcbm{b_{\hbox{{\rm\tiny CBM}}}}
 
\def\Gexp{\clG^+(\gamma)}
\def\Gexps{\clG^+(s)}

 \def\head#1{\paragraph{\textit{#1}}}

 \def\bfDelta{\bfmath{\Delta}}

\def\implies{\Rightarrow}

\def\eqdef{\mathbin{:=}}

\def\lebmeas{\mu^{\hbox{\rm \tiny Leb}}}
\def\Prbty{{\cal  M}}
\def\Prob{{\sf P}}
\def\Probsub{{\sf P\! }}
\def\Expect{{\sf E}}

\def\Var{\hbox{\sf Var}}
\def\Cov{\hbox{\sf Cov}}
\def\CV{\hbox{\sf CV}}

\def\lgmath#1{{\mathchoice{\mbox{\large #1}}%
{\mbox{\large #1}}%
{\mbox{\tiny #1}}%
{\mbox{\tiny #1}}}}

\def\One{{\mathchoice{\lgmath{\sf 1}}%
{\mbox{\sf 1}}%
{\mbox{\tiny \sf 1}}%
{\mbox{\tiny \sf 1}}}}

\def\Zero{{\mathchoice{{\sf 0}}
{\mbox{\sf 0}}%
{\mbox{\tiny \sf 0}}%
{\mbox{\tiny \sf 0}}}}

\def\taboo#1{{{}_{#1}P}}

\def\tinybull{\hbox{\tiny$\ddag$}}

\def\bullP{{{}_{\tinybull}P}}

\def\bullX{X_{\tinybull}}
\def\bfmbullX{\bfmX_{\tinybull}}
\def\bullstate{\state_{\tinybull}}
\def\bullx{x^{\tinybull}}
\def\bullh{h_{\tinybull}}
\def\bullc{c_{\tinybull}}
\def\bulltau{\tau_{\tinybull}}

\def\io{{\rm i.o.}}

\def\as{{\rm a.s.}}
\def\Vec{{\rm vec\, }}

 \def\Tol{\text{\rm Tol}}

 \def\epsy{\varepsilon}

\def\varble{\,\cdot\,}



\def\uE{\underline{E}}

\def\barfzap{\bar{f}^{\textup{\textsf{zap}}}}

\def\barfTD{\bar{f}^{\textsf{TD}}}
\def\ATD{A^{\textsf{TD}}}
\def\bTD{b^{\textsf{TD}}}

\def\barfg{\bar{f}^{\textsf{g}}}
\def\Ag{A^{\textsf{g}}}
\def\bg{b^{\textsf{g}}}

\def\barfG#1{\barf^{#1}}


\def\formtmp#1#2{{\vskip12pt\noindent\fboxsep=0pt\colorbox{#1}{\vbox{\vskip3pt\hbox to \textwidth{\hskip3pt\vbox{\raggedright\noindent\textbf{#2\vphantom{Qy}}}\hfill}\vspace*{3pt}}}\par\vskip2pt%
\noindent\kern0pt}}


\maketitle
\thispagestyle{empty}

\begin{abstract}

Simultaneous perturbation stochastic approximation (SPSA) is an approach to gradient-free optimization introduced by Spall as a simplification of the approach of 
Kiefer and Wolfowitz.    In many cases the most attractive option is the single-sample version known as 1SPSA, which is the focus of the  present paper, containing   two major contributions:  a modification of the algorithm designed to ensure convergence from arbitrary initial condition, and a new approach to exploration to dramatically accelerate the rate of convergence.    Examples are provided to illustrate the theory, and to demonstrate that estimates from unmodified 1SPSA may diverge even for a quadratic objective function.

\end{abstract}

\tableofcontents

\clearpage
 
\section{Introduction}

This article addresses algorithms for estimating the global minimum of an objective function $\Obj: \Re^d \to \Re$, denoted by $\theta^\opt \in \argmin_\theta \Obj(\theta)$. In many practical scenarios, the gradient of $\Obj$ is either unavailable or computationally expensive to obtain. To tackle such challenges, zeroth-order (or derivative-free) optimization methods are available, relying solely on function evaluations rather than gradient information.

Theory is devoted to two special cases of the Simultaneous Perturbation Stochastic Approximation (SPSA) approach introduced by Spall \cite{spa03,spa97,spa92}: 
\begin{subequations}
\begin{align}
\hspace{-0.32cm} 
\text{1SPSA:} \  \theta_{n+1} 
&= 
 \theta_{n} - \alpha_{n+1}   \frac{1}{\epsy_n}  \qsaprobe_{n+1} \Obj^+_n   
\label{e:1SPSA}
\\
\hspace{-0.32cm} 
\text{2SPSA:}\ \theta_{n+1} 
&= 
 \theta_{n} - \alpha_{n+1}   \frac{1 }{2 \epsy_n} \qsaprobe_{n+1} [\Obj^+_n - \Obj^-_n]    
\label{e:2SPSA}
 \\[.5em]
\Obj^+_n = \Obj(\theta_n & + \epsy_n\qsaprobe_{n+1})  \, ,  
\quad 
\Obj^-_n =\Obj(\theta_n - \epsy_n\qsaprobe_{n+1})
\end{align}
in which $\{\alpha_{n+1}\}$ is known as the ``step-size'' or ``learning-rate'',
$\{ \epsy_n \}$  is the  \textit{exploration gain}, and   $\{ \qsaprobe_{n+1}   : n\ge 0 \}$
 is the 
\textit{exploration sequence}---all  
   specified in algorithm design.
\label{e:SPSA_spall}
\end{subequations}

The 1SPSA recursion  is  attractive because it requires only a single observation at each iteration, and hence is more suitable than its two measurement counterpart when there are noisy observations  (that is, for any $\theta$,  the observation $ \Obj(\theta)$ is subject to noise).   
In view of this and space constraints,  we consider exclusively 1SPSA in the analysis that follows.   
While the theory presented here may be extended to the case of noisy observations, we assume throughout that observations are noise-free.


It is assumed in \cite{spa03} that the entries of $\{  \qsaprobe_{n+1}  \}$ are i.i.d.\  with a symmetric distribution. Analysis is based on vanishing $\{\upepsilon_n \}$ and vanishing step-size $\{ \alpha_n\}$ to allow for almost sure convergence to $\theta^\opt$, along with bounds on the rate of convergence.  
However,  in this prior work it is \textit{assumed} that the   sequence of estimates $\{ \theta_n \}$ is bounded with probability one.

Given the relative maturity of stability theory for stochastic approximation,   this boundedness assumption is easily verified for 2SPSA under the standard assumptions that $\nabla \Obj$ is Lipschitz continuous and its norm is coercive  \cite[Ch. 4]{CSRL};  for example, to establish boundedness one can apply the ODE@$\infty$ approach of \cite{bormey00a,borchedevkonmey25}.     Stability theory for 1SPSA is far more challenging because the recursion  typically violates the crucial Lipschitz condition imposed in the SA literature;   Lipschitz continuity fails even for a quadratic objective.

A very similar challenge was discussed in \cite{laumey22d} in the context of  extremum seeking control (ESC).    In this simpler continuous-time setting it was possible to prove that ESC has finite escape time when applied to a positive definite quadratic objective, and the initial condition is sufficiently large.   We do not have a proof of divergence in this discrete time setting, but we illustrate the potential for instability through numerical experiments.

\whamit{Divergence of estimates for 1SPSA.} Consider the simplest example with $d=1$ and quadratic 
 objective $\Obj(\theta) = \theta ^2$, so that  $\theta^\opt = 0$. 
Both versions of SPSA in \eqref{e:SPSA_spall} were implemented with $\bfqsaprobe$ a scaled and shifted Bernoulli sequence taking values in  $\{-1,1\}$ with $\Prob(\qsaprobe = 1) = \half$. The step-size was $\alpha_n = n^{-0.6}$ and two exploration gains were tested $\epsy_n = n^{-0.3}$ (oblivious exploration) and $\epsy_n \equiv \upepsilon(\theta_n) = n^{-0.3}\sqrt{1+ | \theta_n|^2} $ (active exploration);  the first choice is consistent with Spall's  requirement that $ \sum_{i=0}^\infty \alpha^2_n/\epsy^2_n < \infty$ \cite[Ch. 7]{spa03}.

\begin{figure}[h]
	\centering
\includegraphics[width= 0.8\hsize]{./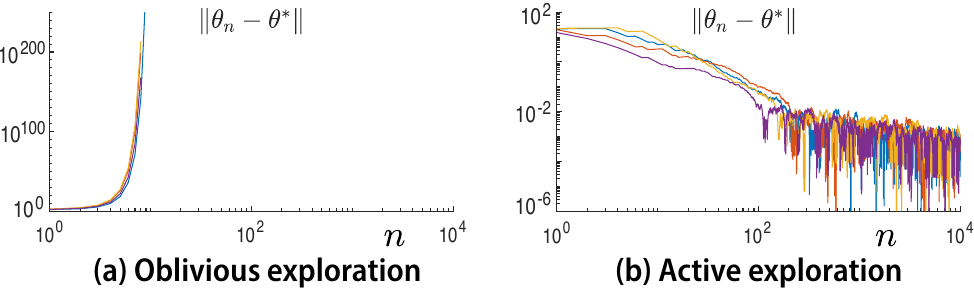}
\caption{Failure for four independent runs of 1SPSA.}
\label{fig:quad}
\end{figure}

\Cref{fig:quad} shows the evolution of estimates from four independent runs of each algorithm, differing by the initial condition  satisfying $ |\theta_0| \leq 10$.   
In \Cref{fig:quad}~(a) we observe that estimates from 1SPSA are unbounded with a constant exploration gain;  that is,  $\epsy_n \equiv \upepsilon_\sbullet>0$ (oblivious exploration).    Clarifying the source of instability is one of the contributions of this paper. 

Results obtained using  $\epsy_n   =  \upepsilon_\sbullet \sqrt{1+ | \theta_n|^2} $ (active exploration), with identical initial conditions, are shown in \Cref{fig:quad}~(b).    
The   plots are consistent with the conclusions of    \Cref{t:1SPSAconv},  that the algorithm  is convergent from any initial condition with active exploration.

\wham{Approach to analysis:} Analysis of SPSA starts with the recognition that these algorithms are special cases of stochastic approximation (SA). 
The framework of SA revolves around solving the   root-finding problem $\barf(\theta^*)=0$, 
in which   the function $\barf: \Re^d \to \Re^d$ is defined as the expectation
\[
\barf(\theta) = \Expect[f(\theta,\Phi)] \,,\quad \textit{in which $\Phi$ is a random vector.
}
\]

The general SA algorithm is a $d$-dimensional recursion:  with initialization $\theta_0 \in \Re^d$,
\begin{equation}
\theta_{n+1} = \theta_n + \alpha_{n+1} f(\theta_n,\Phi_{n+1})
\label{e:SA_recur}
\end{equation}
where  $\bfPhi = \{\Phi_n\}$ is a sequence of random vectors converging in distribution to $\Phi$ as $n\to \infty$.    The so-called ODE method for establishing convergence of  $\{\theta_n\}$ involves first establishing global asymptotic stability of the associated   \textit{mean flow}:
\begin{equation}
\ddt \odestate_t  = \barf(\odestate_t)
\label{e:meanflow}
\end{equation}


\wham{Contributions:}    The following conclusions are obtained subject to   the standard assumptions that $\nabla \Obj$ is Lipschitz continuous and its norm is coercive, along with assumptions on exploration and step-size.

\whamb  A parameter-dependent exploration gain is introduced to ensure global stability of 1SPSA, in the sense of ultimate boundedness: 
there is a fixed constant $b_\tTheta$ such that for any initial condition $\theta_0 \in \Re^d$, $\limsup_{n \to \infty}\| \theta_n\|\leq b_\tTheta$ with probability one.   Under additional assumptions we establish convergence of the algorithm.

\begin{subequations}

The gain is expressed  $\epsy_n = \upepsilon(\theta_n)$ for a smooth function $\upepsilon\colon\Re^d\to (0,\infty)$.     For ease of presentation, analysis is restricted to either of these two possibilities:  
\begin{align}
\upepsilon(\theta) 
&=
 \upepsilon_\sbullet\sqrt{1+ \| \theta - \theta^\ctr\|^2 / \upsigma_p^2 }
\label{e:upepsy2}
\\
\upepsilon(\theta) 
&= 
\upepsilon_\sbullet \sqrt{1 + \Obj(\theta) - \Obj^-} 
\label{e:upepsy1}
\end{align}
where $\upepsilon_\sbullet>0$ is fixed.
In \eqref{e:upepsy2} the vector  $\theta^\ctr$ is interpreted as an a-priori estimate of $\theta^\opt$ with $\upsigma_p$ quantifying uncertainty.    
It is assumed    in \eqref{e:upepsy1} 
that $ \Obj(\theta) \ge \Obj^-$ for all $\theta$.

\textit{The value of a state-dependent exploration gain goes far beyond stabilization of  the algorithm}.   Consider its role as a technique to explore more efficiently:       if $\Obj(\theta_n)$ is large,      a larger  exploration gain ensures that the estimates move more    rapidly away from this undesirable parameter value.

\label{e:upepsy}
\end{subequations}

\whamb   

For either choice  of modified exploration gain  of the form \eqref{e:upepsy}, we show in \Cref{t:1SPSAconv} that
the mean flow  \eqref{e:meanflow} associated with 1SPSA  is exponentially asymptotically stable (EAS)
to an equilibrium $\theta^*$,   provided the gradient flow $\ddt x_t  = - \nabla \Obj\, (x_t) $  is   EAS,  and $\upepsilon_\sbullet>0$ is sufficiently small.  
Moreover, we establish that the 1SPSA algorithm is convergent to $\theta^*$  in this case.
  There is bias, satisfying the order bound $\| 
\theta^\opt - \theta^* \| = O(\upepsilon_\sbullet^2)$.

\whamb 
Even when convergent, the algorithm with or without modified exploration gain may suffer from massive variance when $\upepsilon_\sbullet>0$ is small.  
It is argued that variance can be reduced dramatically through  the introduction of negatively correlated exploration.   
Analysis is restricted to one approach coined \textit{zig-zag} exploration, designed so its power spectral density evaluated at the origin  (also known as the asymptotic covariance) is
 \textit{zero}.    The impact surveyed in  \Cref{t:1SPSAcov}  is remarkable: 
  there are constants $b_0,b_1<\infty$ such that
  \begin{subequations}
\begin{align}
\textbf{i.i.d.:} \, \lim_{N\to\infty}  N \,  \trace( \Cov(\upbeta_N^{\barf} )  )    
& \ge 
 b_0 \frac{1}{\upepsilon_\sbullet^2}  \big[ \Obj\, (\theta^* ) \big]^2
\label{e:IID_var}
\\
\textbf{zig-zag:} \, \lim_{N\to\infty} N  \,  \trace( \Cov(\upbeta_N^{\barf} ) )   
  & \le 
    b_1 \upepsilon_\sbullet^2
\label{e:ZZ_var}
\end{align}
\label{e:BigSmallTargetBias}
\end{subequations}
in which $\upbeta_N^{\barf} $ denotes the  \textit{empirical target bias}:
\begin{equation}
\upbeta_N^{\barf}  \eqdef   \frac{1}{N}  \sum_{n=0}^{N-1}  \barf\, (\theta_n)
\label{e:target_bias}
\end{equation}
\Cref{fig:quad_sincos} serves to illustrate these conclusions---details may be found in 
\Cref{s:SPSA_exp}.

\wham{Commentary:}  
It may be shown using recent theory that the mean-square rate of convergence of $\{\theta_n \}$ to $\theta^*$ is   of order $O(\alpha_n)$,
and this can be accelerated to $O(1/n)$ through the averaging technique of Polyak and Ruppert \cite{borchedevkonmey25}.

Application of theory from \cite{borchedevkonmey25} requires the use of a non-vanishing exploration gain.
While this  restriction was imposed in part for ease of analysis, there is also a practical reason:  while we predict that an unbiased algorithm can be obtained  through the replacement  of $\upepsilon_\sbullet$ by a vanishing sequence $\{ \upepsilon_n \}$ in \eqref{e:upepsy},  the rate of convergence
 in mean square
 will be slowed significantly to 
$O(n^{-\beta} )$ with $\beta<1$.   See the literature review for details.   
 
The bounds in \eqref{e:BigSmallTargetBias} are admittedly abstract.
A more attractive result would conclude that these bounds hold with \eqref{e:target_bias}
replaced by $\nabla \Obj\, (\theta_n)$ or its average.     While we do not yet have a proof of such bounds, in \Cref{t:Lip_qSGD}
 we establish that $\barf\approx - \Sigmaqsa \nabla \Obj$,   with $\Sigmaqsa$ the steady-state covariance    of $\qsaprobe_n$. 
   More precisely,  $ \barf(\theta)  = -\Sigmaqsa    \nabla \Obj(\theta)   +   \upepsilon_\sbullet^2 \Deltabarf(\theta) $, in which $\Deltabarf$ is Lipschitz.

The objective-dependent lower bound in 
\eqref{e:BigSmallTargetBias} suggests that performance using i.i.d.\ exploration might be improved by adding a constant to the objective, perhaps adaptively so that $\Obj (\theta^* )$ is small.    This may also improve the performance of 1SPSA using zig-zag exploration.

\wham{Literature Review:}
While the present paper focuses on algorithms introduced by Spall in \cite{spa87, spa92, spa97}, this work follows the seminal work of  Keifer and Wolfowitz  \cite{kiewol52};  see also \cite{bhaprapra13,bhabor03,gos15,asmgly07}.

\begin{figure*}[h]
\centering
	\includegraphics[width= 0.9\hsize]{./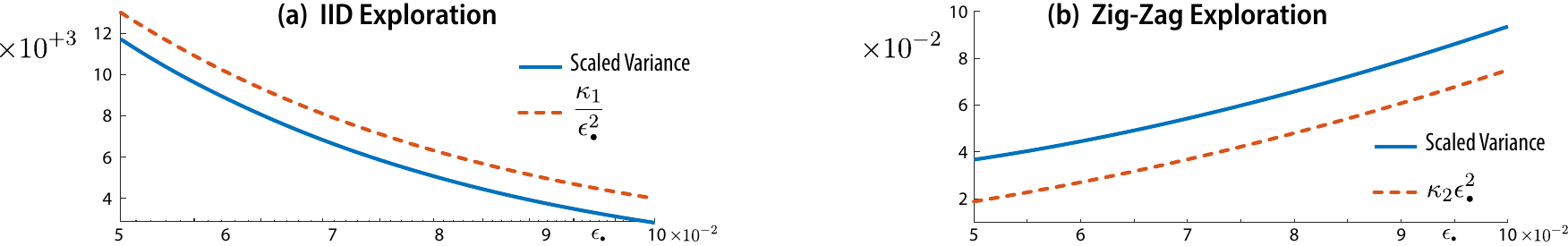}
	\caption{Impact of exploration correlation  when the exploration gain is small:    (a)  Using i.i.d.\ exploration the scaled variance grows as $1/\upepsilon_\sbullet^2$.
	(b)  Using zig-zag exploration the scaled variance \textit{vanishes} as $\upepsilon_\sbullet^2$.}
	\label{fig:quad_sincos}
\end{figure*}

There is substantial research in the vanishing probing gain setting:   the best possible convergence rate for the mean square error is  $O(n^{-\beta} )$ with  
$\beta =  (p-1)/p $,
 provided the  objective function is $p$-fold differentiable at $\theta^\ocp$ \cite{poltsy90}.   Upper bounds appeared earlier in \cite{fab68b}.
See \cite{dipren97,dip03,spa12,pasgho13,larmenwil19} for more recent history.

The incorporation of a state-dependent exploration gain was first proposed by the authors in \cite{laumey22d} to ensure stability of extremum-seeking control (ESC).
This enabled application of general theory to obtain    sharp rates of convergence \cite{laumey22d,laumey22e,laumey25a}. 

ESC theory has been developed almost exclusively in continuous time.   
While the history of ESC predates SPSA, the  1SPSA recursion  with deterministic exploration 
		may be regarded as
	 an Euler approximation of the simplest ESC ODE.
 See also  \cite{bhafumarwan03} for a treatment of
2SPSA with deterministic exploration.

\section{Main Results}

\subsection{Preliminaries}

\wham{Assumptions:}
The following  assumptions are in place throughout:

\wham{(A1)} 
The step-size sequence is of the form $\alpha_n =  \min\{\alpha_0, n^{-\rho} \}$ with $\rho \in (1/2,1)$ and $\alpha_0>0$. %

\wham{(A2)}
$\Obj \colon\Re^d \to \Re$ is twice continuously differentiable,  with first and second derivatives globally Lipschitz continuous, and 
there is $\delta>0$ such that  $\|\nabla \Obj (\theta)\|  \ge \delta \| \theta\| $ for $\|\theta\| \ge \delta^{-1}$.

\wham{(A3)} The exploration sequence $\bfqsaprobe$ is a functional of a zero-mean i.i.d. sequence $\bfmW \eqdef \{ W_n: n \geq 0 \}$ evolving on a compact subset of $\Re^d$ and satisfying the following for each $n$:  
\[
\begin{aligned} 
\Expect[   W^i_n  W^j_n  W^k_n    ]
&=0 
\quad \text{ for all $1 \leq i \leq j \leq k \leq d$}
\\
\Sigma_W \eqdef  \Expect[W_n W^\transpose_n ] ) &>0
\end{aligned}
\]
Two cases are considered. For $n\geq1$,

\wham{(A3i)}   \textbf{i.i.d.}: $\bfqsaprobe_n = W_n$.

\wham{(A3ii)}   \textbf{zig-zag}: $ \qsaprobe_{n}  =     \zsigma[W_n - W_{n-1}]$ with $\zsigma>0$.

\wham{(A4)}  The gradient flow ODE $\ddt x_t = - \nabla \Obj(x_t)$ is globally exponentially asymptotically stable, with $\theta^\opt \in\Re^d$ its unique stationary point.
Moreover, $\nabla \Obj\, (\theta^\opt)$ is strictly positive definite.

Under (A2) there exists at least one global minimizer, which is denoted $\theta^\opt$; (A4) implies that $x_t \to \theta^\opt$ as $t \to \infty$.

\wham{Notation:}

For a  $d$-dimensional vector-valued random variable $X$ and $p\ge 1$,
the $L_p$ norm is denoted 
$\| X \|_p = ( \Expect[\| X \|^p])^{1/p}$,  and the $L_p$  \textit{span norm}  $ \|X  \|_{p,s}= \min\{ \| X - c \|_p  : c \in \Re^d \}$.   
When $p=2$ we have  
\begin{align*}
\|X  \|_{2,s} & = \sqrt{\trace(\Cov(X))}   
\\
\|X  \|_{2}^2 & = \|X  \|_{2,s}^2  +  \| \Expect[X] \|^2   
\end{align*}

\subsection{Design Guidelines for SPSA}

\wham{Stabilization through exploration gain} 
The proofs of the two propositions that follow are contained in    \Cref{s:Proofs!}.

\begin{proposition}
\label[proposition]{t:1SPSAconv}   If (A1)--(A2) hold along with either version of (A3), then 
there is a fixed constant $b_\tTheta$ such that with probability one,
\begin{equation}
\limsup_{n \to \infty}\| \theta_n\|\leq b_\tTheta  \quad \textit{for any initial condition $\theta_0 \in \Re^d$}
\label{e:UltBdd}
\end{equation}
If in addition (A4) holds, then there exists $\upepsilon_\sbullet^0>0$ such that the following hold for $\upepsilon_\sbullet \in (0, \upepsilon_\sbullet^0 ]$:

\begin{romannum}

\item   The algorithm is convergent: There is a unique $\theta^*\in\Re^d$ satisfying $\barf(\theta^*) =0$,  and for each initial condition
 $\theta_n \to \theta^*$ almost surely and in mean-square.

\item   The order bound 
  $\| \barf(\theta^\opt) \| \le   O(\upepsilon_\sbullet^2) $  holds for  $0\le \upepsilon_\sbullet \le  \upepsilon^0_\sbullet$.    
Moreover, 
$\| \theta^* - \theta^\opt \| =  O(\upepsilon_\sbullet^2) $ provided   $ \nabla^2   \Obj (\theta^\opt)$ is  positive-definite.

\end{romannum}
\end{proposition}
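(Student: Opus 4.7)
The plan is to exhibit \eqref{e:1SPSA} as a standard stochastic approximation recursion
\[
\theta_{n+1}=\theta_n+\alpha_{n+1}f(\theta_n,\Phi_{n+1}),\qquad f(\theta,\Phi)=-\tfrac{1}{\upepsilon(\theta)}\qsaprobe\,\Obj\!\bigl(\theta+\upepsilon(\theta)\qsaprobe\bigr),
\]
with $\Phi_{n+1}=W_{n+1}$ in the i.i.d.\ case and $\Phi_{n+1}=(W_n,W_{n+1})$ in the zig-zag case. A two-term Taylor expansion of $\Obj$ about $\theta$, together with the vanishing third moments in (A3), yields
\[
\barf(\theta)=-\Sigmaqsa\,\nabla\Obj(\theta)+\upepsilon_\sbullet^{2}\Deltabarf(\theta),
\]
exactly the content of \Cref{t:Lip_qSGD}, in which $\Deltabarf$ is globally Lipschitz. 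Crucially, the state-dependent exploration gain \eqref{e:upepsy} is chosen so that for either choice \eqref{e:upepsy2} or \eqref{e:upepsy1} the ratio $\Obj(\theta)/\upepsilon(\theta)$ grows at most linearly in $\|\theta\|$; combined with Lipschitz continuity of $\nabla\Obj$ in (A2), this is what makes $f(\theta,\Phi)$ Lipschitz in $\theta$ uniformly over the bounded $W$-support. Without this structural property the standard SA machinery fails, which is precisely why unmodified 1SPSA can diverge as shown in \Cref{fig:quad}(a).

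For ultimate boundedness \eqref{e:UltBdd} the plan is to invoke the ODE@$\infty$ framework of \cite{bormey00a,borchedevkonmey25}. I would form the scaled field $f_c(\theta,\Phi)=c^{-1}f(c\theta,\Phi)$ for $c\to\infty$; the $W\,\Obj(\theta)/\upepsilon(\theta)$ piece contributes a term that is bounded in $c$ and vanishes after averaging, while the expectation of the remaining piece scales to a stable linear-in-$\theta$ field dominated by the Hessian of $\Obj$ along radial directions, thanks to coercivity in (A2). Global asymptotic stability of the scaled ODE to the origin then yields, via Borkar--Meyn, almost-sure boundedness of $\{\theta_n\}$ under the step-size condition (A1). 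The main obstacle in this step is a careful verification that the noise sequence (Markovian in the zig-zag case after state augmentation) fits the hypotheses of the cited stability result; this is routine but needs the growth control described above.

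For part (i), the mean flow $\ddt\odestate_t=\barf(\odestate_t)$ is, by the decomposition of $\barf$, a global $O(\upepsilon_\sbullet^{2})$ perturbation of $\ddt x_t=-\Sigmaqsa\nabla\Obj(x_t)$. Under (A4) the latter admits a strict Lyapunov function $V$ with $\langle\nabla V(x),\,-\Sigmaqsa\nabla\Obj(x)\rangle\le-c_0 V(x)$ (using positivity of $\Sigmaqsa$). A direct Lyapunov computation shows that for $\upepsilon_\sbullet\le\upepsilon_\sbullet^{0}$ the perturbed flow remains globally EAS, hence has a unique equilibrium $\theta^*$ with $\barf(\theta^*)=0$; existence and uniqueness may equivalently be obtained by a Banach fixed-point argument applied to $\theta\mapsto\theta+\lambda\barf(\theta)$ for small $\lambda>0$. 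Almost sure and mean-square convergence of $\theta_n\to\theta^*$ then follow by combining ultimate boundedness with global EAS of the mean flow via the standard ODE method for SA.

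Part (ii) is essentially a corollary of the above. The bound $\|\barf(\theta^\opt)\|=O(\upepsilon_\sbullet^{2})$ is immediate from the Taylor decomposition, since $\nabla\Obj(\theta^\opt)=0$ and $\Deltabarf$ is bounded on compact sets. For the bias in the equilibrium, note that $\nabla\barf(\theta^\opt)=-\Sigmaqsa\nabla^{2}\Obj(\theta^\opt)+O(\upepsilon_\sbullet^{2})$, which is invertible for small $\upepsilon_\sbullet$ under the added positive-definiteness hypothesis. Applying the implicit function theorem to $\barf(\theta^*)=0$ viewed as a smooth equation in the parameter $\upepsilon_\sbullet^{2}$ yields
\[
\theta^*-\theta^\opt=\bigl(\Sigmaqsa\nabla^{2}\Obj(\theta^\opt)\bigr)^{-1}\upepsilon_\sbullet^{2}\Deltabarf(\theta^\opt)+o(\upepsilon_\sbullet^{2})=O(\upepsilon_\sbullet^{2}).
\]
I expect the technically most delicate step to be the Lipschitz/growth verification of $f$ that underlies both the ODE@$\infty$ stability argument and the perturbation of EAS; once those structural facts are in hand, the remaining arguments are largely bookkeeping applications of known SA theorems.
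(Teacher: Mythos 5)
Your proposal follows essentially the same route as the paper: cast 1SPSA as stochastic approximation, use the state-dependent gain to obtain uniform Lipschitz continuity of $f$ and the expansion $\barf=-\Sigmaqsa\nabla\Obj+\Deltabarf$ (the paper's \Cref{t:Lip_qSGD}), invoke the ODE@$\infty$/Borkar--Meyn machinery (the paper cites \cite[Thm.~4]{borchedevkonmey25} directly, after verifying mean-flow stability and the ergodicity condition (DV3) in its preparatory lemmas), and derive the bias bound from the Taylor decomposition plus the implicit function theorem. The only caveat is your phrase ``global $O(\upepsilon_\sbullet^{2})$ perturbation'': since $\upepsilon_\theta$ grows with $\|\theta\|$, the error term is only $O(\upepsilon_\sbullet^{2})$ on compact sets and globally is a perturbation with small Lipschitz constant (cf.\ \eqref{e:barbf_bdds}), which is exactly the property your Lyapunov and fixed-point arguments actually need, so this is an imprecision of statement rather than a gap.
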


\wham{Acceleration through exploration} 
In either version of assumption (A3), it follows  that the polynomial moments of $\bfqsaprobe$ are finite

The next result reveals the remarkable value of zig-zag exploration. The span norm \textit{vanishes} quadratically with $\upepsilon_\sbullet$ when $\qsaprobe$ is zig-zag, rather than diverging to infinity in the i.i.d. case.

\begin{proposition}
\label[proposition]{t:1SPSAcov}   Suppose (A1), (A2) and (A4) hold.
\begin{romannum}
\item If in addition (A3i) holds, there is $\alpha_0>0$ , $\upepsilon_\sbullet^0>0$  and $b_0<\infty$ such that    for $\alpha \in (0,\alpha_0]$  and $\upepsilon_\sbullet \in (0, \upepsilon_\sbullet^0)$,
the target bias  \eqref{e:target_bias} satisfies \eqref{e:IID_var}.

\item If in addition (A3ii) holds, there is $\alpha_0>0$, $\upepsilon_\sbullet^0>0$   and $b_1<\infty$ such that such that    for $\alpha \in (0,\alpha_0]$  and $\upepsilon_\sbullet \in (0, \upepsilon_\sbullet^0)$, 
the target bias  \eqref{e:target_bias} satisfies \eqref{e:ZZ_var}.
\end{romannum}
\end{proposition}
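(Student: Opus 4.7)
The empirical target bias $\upbeta_N^{\barf}$ is an ergodic average of $\{\barf(\theta_n)\}$ along a trajectory of \eqref{e:SA_recur}, so its scaled variance is determined by the zero-frequency spectrum (asymptotic covariance) of $\{\barf(\theta_n)\}$ under the stationary law. Define the noise $\clE_{n+1} \eqdef f(\theta_n,\Phi_{n+1})-\barf(\theta_n)$; summation of \eqref{e:SA_recur} yields the telescoping identity
$\sum_{n=0}^{N-1}\alpha_{n+1}\barf(\theta_n) = (\theta_0 - \theta_N) - \sum_{n=0}^{N-1}\alpha_{n+1}\clE_{n+1}$.
Since \Cref{t:1SPSAconv} guarantees that $\{\theta_n\}$ remains $L_2$-tight, a Polyak-Ruppert-type argument (cf.\ \cite{borchedevkonmey25}) yields $\lim_{N\to\infty} N\,\trace(\Cov(\upbeta_N^{\barf})) = \trace(\Sigma_\clE)$, where $\Sigma_\clE$ is the zero-frequency covariance of $\{\clE_n\}$ at stationarity. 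The two parts therefore reduce to leading-order bounds on $\trace(\Sigma_\clE)$ in each exploration model.

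\textbf{Part (i), i.i.d.\ probing.} Since $\barf(\theta^*)=0$, Taylor expansion of $f(\theta,\xi)=-\upepsilon(\theta)^{-1}\xi\,\Obj(\theta+\upepsilon(\theta)\xi)$ around $\theta^*$ yields
\[
\clE_n\big|_{\theta=\theta^*} \;=\; -\tfrac{\Obj(\theta^*)}{\upepsilon(\theta^*)}\,\qsaprobe_n \;+\; R_n,
\]
with $R_n$ uniformly $L_2$-bounded in $\upepsilon_\sbullet$; its scale is set by $\nabla\Obj(\theta^*)$ and higher-order Taylor terms, and the vanishing-third-moment hypothesis in (A3) eliminates what would otherwise be an intermediate $O(1)$ term. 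Because $\qsaprobe_n=W_n$ is i.i.d.\ with $\Sigma_W>0$ and $\upepsilon(\theta^*)=\upepsilon_\sbullet\,r^*$ with $r^*=O(1)$, the zero-frequency covariance satisfies $\Sigma_\clE = (r^*)^{-2}\upepsilon_\sbullet^{-2}[\Obj(\theta^*)]^2\Sigma_W + O(\upepsilon_\sbullet^{-1})$. Taking the trace delivers \eqref{e:IID_var} with $b_0 = \trace(\Sigma_W)/(2(r^*)^2)$ for $\upepsilon_\sbullet$ small enough.

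\textbf{Part (ii), zig-zag probing.} Now $\qsaprobe_n=\zsigma(W_n-W_{n-1})$, so the leading piece of $\clE_n|_{\theta^*}$ is $-\tfrac{\Obj(\theta^*)\zsigma}{\upepsilon(\theta^*)}(W_n-W_{n-1})$, whose partial sums \emph{telescope} to the a.s.\ bounded quantity $-\tfrac{\Obj(\theta^*)\zsigma}{\upepsilon(\theta^*)}(W_N-W_0)$ and therefore contribute zero to the asymptotic variance. The residual $R_n$ can be shown to be $L_2$-bounded of order $O(\upepsilon_\sbullet)$ by invoking \Cref{t:Lip_qSGD} (i.e.\ $\barf(\theta)=-\Sigmaqsa\nabla\Obj(\theta)+\upepsilon_\sbullet^2\Deltabarf(\theta)$ with $\Deltabarf$ Lipschitz) and stripping off the telescoping component; its partial sums then have variance of order $O(\upepsilon_\sbullet^2\,N)$, and \eqref{e:ZZ_var} follows.

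\textbf{Main obstacle.} The delicate step is passing from ``noise at $\theta^*$'' to ``noise along $\theta_n$'': the iterates only stay in an $O(\upepsilon_\sbullet)$-neighborhood of $\theta^*$, and one must check that this spread does not alter the leading $\upepsilon_\sbullet$-scaling in either direction. In (i) this calls for a uniform lower bound on the noise covariance along the trajectory; in (ii) one must verify that the zig-zag cancellation survives when the coefficient $\Obj(\theta^*)/\upepsilon(\theta^*)$ is replaced by the $n$-dependent $\Obj(\theta_n)/\upepsilon(\theta_n)$. A summation-by-parts argument paired with the Lipschitz smoothness of $\barf$ from \Cref{t:Lip_qSGD} should close this gap, at the cost of only an additional $O(\upepsilon_\sbullet^2)$ error---harmless in (ii) and subdominant in (i).
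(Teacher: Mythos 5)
Your proposal is correct and follows essentially the same route as the paper: reduce the scaled variance of $\upbeta_N^{\barf}$ to the asymptotic covariance of the noise frozen at $\theta^*$, isolate the dominant term $-\Obj(\theta^*)\qsaprobe_n/\upepsilon_\sbullet$ (positive-definite covariance of order $\upepsilon_\sbullet^{-2}$ under i.i.d.\ probing, telescoping to zero under zig-zag), and control the remainder via the explicit $\upepsilon_\sbullet$ factor in the Hessian term together with $\|\nabla\Obj(\theta^*)\|=O(\upepsilon_\sbullet^2)$ from $\barf(\theta^*)=0$ and \Cref{t:Lip_qSGD}. The ``main obstacle'' you flag---transferring from noise along $\theta_n$ to noise at $\theta^*$---is exactly what the paper disposes of by quoting \Cref{t:Asympt_target} (i.e.\ Thm.~5 of the cited reference), so no new argument is needed there; note also the harmless sign slip in your telescoping identity, which should read $\theta_N-\theta_0$ on the right-hand side.
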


\section{Numerical Experiments}
\label{s:SPSA_exp}

The experiments for which results are surveyed next aim to illustrate the conclusions of \Cref{t:1SPSAcov}.

\wham{Simulation Setup:} The 1SPSA algorithm was used to minimize the objective $\Obj(\theta) = \theta^2 - \cos(\theta)  - \sin(5 \theta)/5 + 4$, in which $d=1$. 
The trigonometric terms are introduced for two reasons:  so that $\nabla\Obj\, (\theta^\opt )$ has a unique solution, $\theta^\opt =0$,  and so that the objective is not symmetric around $\theta^\opt$ (which would imply $\theta^* = \theta^\opt$).

Both i.i.d.\ and zig-zag exploration were considered, constructed based on a common i.i.d. sequence $\{W_n\}$,  in which $W_n$ was uniformly sampled from $[-1,1]$ for each $n$. 
For i.i.d.\ exploration $\qsaprobe_n = W_n$.    For zig-zag,   $ \qsaprobe_{n}  =     \zsigma[W_n - W_{n-1}]$ with  $\zsigma = 1/\sqrt{2}$   selected so that the variance of $\qsaprobe_n$ is the same as in the i.i.d.\ case.   

The step-size sequence was $\alpha_n = n^{-0.6}$ and the exploration gain was chosen state-dependent,
 following \eqref{e:upepsy2} with $\theta^\ctr =0$ and $\upsigma_p =1$. The algorithm was implemented for several choices of  $\upepsilon_\sbullet$ in the range $ [0.05,0.1]$. For each value of $\upepsilon_\sbullet$, $M=100$ independent experiments were carried out for a time horizon of $N = 5 \times 10^5$ with initial conditions $\{\theta_0^i : 1 \leq i \leq M \}$ uniformly sampled from $[-10,10]$. 

For the $i^{\text{th}}$ run with $\upepsilon_\sbullet$ fixed, the empirical mean of the gradient  was computed:  
\[
\upbeta_{N,\upepsilon_\sbullet}^{\nabobj,i} 
= 
\frac{1}{N-N_0+1}\sum_{k=N_0}^N \nabla \Obj(\theta_k)
\, , \quad 1 \leq i \leq M
\]
in which $N_0 = 1.5 \times 10^5$,  chosen to reduce the impact of transients \cite{CSRL}.

\wham{Results:} \Cref{fig:quad_sincos} depicts the evolution of $(N-N_0) \Cov( \upbeta_{N,\upepsilon_\sbullet}^{\nabobj} )$ (i.e., the scaled variance of the mean of the gradient across independent runs) as a function of $\upepsilon_\sbullet$. 

Also shown in \Cref{fig:quad_sincos} for comparison with the bounds expected by \Cref{t:1SPSAcov} are  the functions $r_1(\upepsilon_\sbullet) = \kappa_1/\upepsilon_\sbullet^2 $ and $r_2(\upepsilon_\sbullet) = \kappa_2 \upepsilon_\sbullet^2 $. The constants $\kappa_1$ and $\kappa_2$ were chosen to aid comparison.

The value of zig-zag exploration  is illustrated once more by the plots in \Cref{fig:quad_sincos}. As  expected from the results in \Cref{t:1SPSAcov} and the fact that $\barf \approx -\nabla \Obj$ from \Cref{t:Lip_qSGD}, we see that the variance vanishes with $\upepsilon_\sbullet$ when the exploration is zig-zag, while growing without bound in the i.i.d. case.
 
\section{Conclusions}

With attention to recent SA theory we introduce here a gradient free optimization algorithm based on 1SPSA that is globally convergent, and with drastically reduced variance.  
Current research is devoted to non-convex settings in which the mean-flow is not globally asymptotically stable.

 \clearpage

\appendix
\section{Technical Proofs}

The 1SPSA recursion \eqref{e:1SPSA} can be expressed as SA  \eqref{e:SA_recur}, in which 
$\Phi_n = \qsaprobe_n$ in the i.i.d.\ setting (A3i),   and  $\Phi_n = (W_{n-1};W_{n})$ for zig-zag (A3ii).  
In either setting, it is convenient to adopt the following notation for the SA recursion,  
\begin{equation}
 \theta_{n+1} = \theta_n + \alpha_{n+1} [\barf(\theta_n) + \Delta_{n+1}]
 \label{e:Noisy_Euler}
 \end{equation}
 in which $\Delta_{n+1} = f(\theta_n,\Phi_{n+1}) - \barf(\theta_n)$. 
 This traditional change of notation is employed to justify the interpretation of \eqref{e:SA_recur} as a noisy  Euler approximation of the  mean flow \eqref{e:meanflow}.

The proofs of \Cref{t:1SPSAconv,t:1SPSAcov} require that we establish that the assumptions of \cite[Thm. 5]{borchedevkonmey25} hold.

\subsection{Lipschitz continuity} 
We begin with the essential Lipschitz continuity assumption for $f$.  
When convenient, to save space we write $\upepsilon_\theta$ instead of $\upepsilon(\theta)$.
Denote $f(\theta,\Phi) \eqdef -  \qsaprobe   \Obj(\theta +  \upepsilon_\theta   \qsaprobe )   / \upepsilon_\theta $ with associated  mean vector field $\barf (\theta) = \Expect[f(\theta,\Phi)]$.

\begin{proposition}
\label[proposition]{t:Lip_qSGD} 
Consider the 1SPSA algorithm in which $f$ is defined by \eqref{e:1SPSA}.  
Suppose that (A2) and either (A3i) or (A3ii) hold, and that $\upepsilon_\theta$ is defined by either choice in \eqref{e:upepsy}.
Then, $f$ is uniformly Lipschitz continuous:  there is $L>0$ such that
\begin{equation}
\|f(\theta',\qsaprobe) - f(\theta,\qsaprobe)   \|  \le L  \|\theta' - \theta \| \,, \quad  \theta \,, \, \qsaprobe \in\Re^d 
\label{e:fLip}
\end{equation} 
Moreover, for all $\theta, \qsaprobe\in\Re^d$,
\begin{subequations}
\begin{align}
&
\begin{aligned}
f(\theta,\Phi)  = 
 &
 -  \frac{1}{ \upepsilon_\theta}      \qsaprobe\Obj(\theta)  -      \qsaprobe \qsaprobe^\transpose   \nabla   \Obj\, (\theta )  
\\
& 
\, - \half \upepsilon_\theta    \qsaprobe^\transpose   \nabla^2   \Obj\, (\theta )   \qsaprobe \qsaprobe^\transpose   +  \Deltaf(\theta,\qsaprobe)
\end{aligned}
\label{e:f_Taylor}
\\[.5em]
& \barf(\theta)  =    -      \Sigmaqsa   \nabla   \Obj\, (\theta )   +  \Deltabarf(\theta)
\label{e:meanflow_fTaylor}
\end{align}
in which  $\Sigmaqsa = \Expect[ \qsaprobe_n\qsaprobe_n^\transpose]$.
\label{e:both_bf_bdds}
\end{subequations}

The error terms $\Deltaf$, $\Deltabarf$ are smooth functions of their arguments,  uniformly Lipschitz, and satisfying the following bounds:
 there exits $\bdd{t:Lip_qSGD} $ such that
  for all $\theta  \in \Re^d$:
\begin{equation}
\!\!\!\!\!
  \| \Deltabarf(\theta)\| \le  \max  \{  \| \Deltaf(\theta,\qsaprobe)\| : \qsaprobe\in\Re^d \}  \le \bdd{t:Lip_qSGD}  \min \{  \upepsilon_\theta  ,  \upepsilon_\theta^2 \}
\label{e:barbf_bdds}
\end{equation}\end{proposition}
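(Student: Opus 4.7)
My plan has three stages, following the decomposition suggested by the statement: (i) derive the Taylor expansion \eqref{e:f_Taylor}; (ii) establish the two-regime remainder bound \eqref{e:barbf_bdds}; and (iii) verify the Lipschitz estimate \eqref{e:fLip}. Throughout, I would lean on the fact that (A3) forces $\qsaprobe_n$ to take values in a compact set, so that polynomial factors of $\|\qsaprobe\|$ can be absorbed into constants; the Lipschitz constant $L$ in \eqref{e:fLip} will be allowed to depend on the diameter of this compact support.

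For Step (i), assumption (A2) gives a globally Lipschitz Hessian, so the integral form of Taylor's theorem applied to $\Obj$ around $\theta$ yields, for any $h$,
\[
\Obj(\theta+h)=\Obj(\theta)+h^\transpose\nabla\Obj(\theta)+\tfrac12 h^\transpose\nabla^2\Obj(\theta)h+R(\theta,h),\qquad |R(\theta,h)|\le C\|h\|^3.
\]
Setting $h=\upepsilon_\theta\qsaprobe$ and multiplying through by $-\qsaprobe/\upepsilon_\theta$ reproduces \eqref{e:f_Taylor} with the identification $\Deltaf(\theta,\qsaprobe)=-\qsaprobe R(\theta,\upepsilon_\theta\qsaprobe)/\upepsilon_\theta$, whence $\|\Deltaf\|\le C\|\qsaprobe\|^{4}\upepsilon_\theta^{2}$. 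Taking expectation gives \eqref{e:meanflow_fTaylor}: the $-\qsaprobe\Obj(\theta)/\upepsilon_\theta$ term vanishes because $\Expect[\qsaprobe]=0$, and the cubic term vanishes because $\Expect[\qsaprobe^{i}\qsaprobe^{j}\qsaprobe^{k}]=0$. The latter identity is immediate under (A3i); for (A3ii) I would expand $(W_n-W_{n-1})^{\otimes 3}$, note that the eight resulting cross-terms factor by independence, and see that the vanishing third moment on $W_n$ together with $\Expect[W_n]=0$ kills every one of them.

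Step (ii) requires the complementary $\upepsilon_\theta$ bound, which is the only piece for which Taylor's theorem is too blunt. For large $\|\theta\|$ I would bound each summand of \eqref{e:f_Taylor} directly, using the quadratic growth of $\Obj$ and linear growth of $\nabla\Obj$ (both consequences of (A2)). The crucial algebraic fact is that both choices in \eqref{e:upepsy} satisfy $\upepsilon_\theta^{2}\ge c(1+\|\theta\|^{2})$: for \eqref{e:upepsy2} this is immediate, and for \eqref{e:upepsy1} one integrates the tail lower bound $\|\nabla\Obj(\theta)\|\ge\delta\|\theta\|$ along radial rays to promote it to a quadratic lower bound on $\Obj$. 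Given this, every one of $\|\qsaprobe\Obj(\theta)/\upepsilon_\theta\|$, $\|\qsaprobe\qsaprobe^\transpose\nabla\Obj(\theta)\|$, $\|\upepsilon_\theta\qsaprobe^\transpose\nabla^{2}\Obj\,\qsaprobe\qsaprobe^\transpose\|$, and $\|f(\theta,\qsaprobe)\|$ is $O(\upepsilon_\theta)$, giving $\|\Deltaf\|=O(\upepsilon_\theta)$; combined with the Taylor bound this is \eqref{e:barbf_bdds}.

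For Step (iii), write $f(\theta,\qsaprobe)=-\qsaprobe\,g(\theta;\qsaprobe)$ with $g(\theta;\qsaprobe)=\Obj(\theta+\upepsilon_\theta\qsaprobe)/\upepsilon_\theta$, and compute
\[
\nabla_\theta g=\frac{(I+\nabla\upepsilon_\theta\qsaprobe^\transpose)\nabla\Obj(\theta+\upepsilon_\theta\qsaprobe)}{\upepsilon_\theta}-\frac{\Obj(\theta+\upepsilon_\theta\qsaprobe)\,\nabla\upepsilon_\theta}{\upepsilon_\theta^{2}}.
\]
A direct calculation shows that $\nabla\upepsilon_\theta$ is uniformly bounded for both choices in \eqref{e:upepsy} (in \eqref{e:upepsy2} by $\upepsilon_\sbullet/\upsigma_p$; in \eqref{e:upepsy1} after cancelling $\|\nabla\Obj\|$ against $\sqrt{\Obj-\Obj^{-}}$, again using the quadratic lower bound on $\Obj$), while $\upepsilon_\theta\ge\upepsilon_\sbullet>0$ prevents the denominators from vanishing. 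Using the growth estimates $|\Obj(\theta+\upepsilon_\theta\qsaprobe)|\le C(1+\|\theta\|^{2}+\upepsilon_\theta^{2}\|\qsaprobe\|^{2})$ and $\|\nabla\Obj(\theta+\upepsilon_\theta\qsaprobe)\|\le C(1+\|\theta\|+\upepsilon_\theta\|\qsaprobe\|)$ together with $\upepsilon_\theta\ge c(1+\|\theta\|)$ shows that both terms are uniformly bounded over $\theta$ for $\qsaprobe$ in its compact support, yielding \eqref{e:fLip}. The corresponding Lipschitz property of $\Deltaf$, and thereby of $\Deltabarf=\Expect[\Deltaf]$, follows from subtracting the leading terms in \eqref{e:f_Taylor}, each of which is explicitly Lipschitz by the same growth bounds. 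The main obstacle is the uniform-in-$\theta$ control in Steps (ii) and (iii): we must verify that the specific forms in \eqref{e:upepsy} really grow linearly in $\|\theta\|$ while keeping $\nabla\upepsilon_\theta$ bounded. For \eqref{e:upepsy2} this is trivial; for \eqref{e:upepsy1} it is the single nontrivial computation in the argument and rests entirely on converting the coercivity hypothesis in (A2) into the quadratic lower bound $\Obj(\theta)\ge c\|\theta\|^{2}-C$.
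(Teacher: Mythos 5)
Your overall architecture matches the paper's: both arguments rest on the integral form of Taylor's theorem (the paper writes it as two nested mean--value identities), identify $\Deltaf$ with the remainder, kill the $\qsaprobe\Obj(\theta)/\upepsilon_\theta$ and cubic terms in expectation using $\Expect[\qsaprobe]=0$ and the vanishing third moments (your explicit check of the zig-zag case is a welcome addition the paper omits), and prove \eqref{e:fLip} by controlling a derivative of $f$ in $\theta$ --- you differentiate directly, the paper splits $f$ by the first-order identity and argues each piece is a composition of Lipschitz maps; these are equivalent in substance.

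There is, however, one step that fails as written: your derivation of $\Obj(\theta)\ge c\|\theta\|^2-C$ for the choice \eqref{e:upepsy1} by ``integrating the tail lower bound $\|\nabla\Obj(\theta)\|\ge\delta\|\theta\|$ along radial rays.'' That hypothesis bounds only the \emph{magnitude} of the gradient, not its radial component, so integration along rays yields nothing; e.g.\ $\Obj(\theta_1,\theta_2)=\theta_1\theta_2$ satisfies $\|\nabla\Obj(\theta)\|=\|\theta\|$ yet is unbounded below along $\theta_2=-\theta_1$. The conclusion is still recoverable, because \eqref{e:upepsy1} is only used under the standing assumption $\Obj\ge\Obj^-$: for a lower-bounded function with $L$-Lipschitz gradient one has $\|\nabla\Obj(\theta)\|^2\le 2L(\Obj(\theta)-\Obj^-)$, which combined with the coercivity in (A2) gives the quadratic lower bound (and, as you need separately, the boundedness of $\nabla\upepsilon_\theta$). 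You should also note that this entire detour is avoidable: the two-regime bound \eqref{e:barbf_bdds} can be read directly off the explicit remainder
\[
\Deltaf(\theta,\qsaprobe)=-\qsaprobe\,\upepsilon_\theta\iint \qsaprobe^\transpose\bigl[\nabla^2\Obj(\theta+\upepsilon_\theta\qsaprobe t r)-\nabla^2\Obj(\theta)\bigr]\qsaprobe\, t\,dr\,dt ,
\]
since boundedness of $\nabla^2\Obj$ (a consequence of Lipschitz $\nabla\Obj$) gives the $O(\upepsilon_\theta)$ half and Lipschitz continuity of $\nabla^2\Obj$ gives the $O(\upepsilon_\theta^2)$ half, with no growth condition on $\upepsilon_\theta$ required. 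This is the route the paper takes, and it makes your Step (ii) --- the only place the incorrect ray-integration argument is invoked for the remainder bound --- unnecessary.
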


\begin{proof}
The following identity is a step in the proof of the mean value theorem:
\[
\frac{1}{\upepsilon_\theta} \qsaprobe\Obj(\theta+\upepsilon_\theta\qsaprobe)
 = 
 \frac{1}{\upepsilon_\theta} \qsaprobe \Obj(\theta) 
 + 
 \qsaprobe \qsaprobe^\transpose \int^1_0  \nabla \Obj(\theta+\upepsilon_\theta\qsaprobe t)  \, dt
\]  
The integrand in the second term is Lipschitz in $\theta$,  since the composition of Lipschitz functions is Lipschitz.  The first term is Lipschitz since its gradient is bounded under the given assumptions.  This establishes \eqref{e:fLip}.
 
The representations \eqref{e:both_bf_bdds} are obtained from  the following identities, which are parts of the proof of the mean value theorem for $\Obj$ and $\nabla \Obj$, respectively: for $\theta \in \Re^d$,
\begin{align}
\Obj(\theta+\upepsilon_\theta\qsaprobe) 
&= 
\Obj(\theta) + \upepsilon_\theta \int^1_0 \qsaprobe^\transpose \nabla \Obj(\theta+\upepsilon_\theta\qsaprobe t)  \, dt
\label{e:first_mvt}
\\
\nabla \Obj(\theta+\upepsilon_\theta\qsaprobe t) 
&= 
\nabla \Obj(\theta) + t \upepsilon_\theta  \int^1_0 \nabla^2
 \Obj(\theta+\upepsilon_\theta \qsaprobe t r) \qsaprobe \, dr
\label{e:second_mvt}
\end{align}
 
Upon plugging \eqref{e:second_mvt} into \eqref{e:first_mvt}, adding and subtracting $\half \upepsilon_\theta^2 \qsaprobe^\transpose \nabla^2 \Obj(\theta) \qsaprobe$ to the right hand side, we obtain \eqref{e:f_Taylor} with 
\[
\Deltaf(\theta,\qsaprobe) 
= 
-\qsaprobe \upepsilon_\theta \iint    \qsaprobe^\transpose [\nabla^2 \Obj(\theta+\upepsilon_\theta\qsaprobe t r) - \nabla^2 \Obj(\theta)] \qsaprobe\,  t  \, dr \, dt
\]
This establishes  \eqref{e:f_Taylor}, and thence \eqref{e:meanflow_fTaylor} is obtained by taking expectations of both sides of \eqref{e:f_Taylor}. 
The bounds \eqref{e:barbf_bdds} follow from the representation for $\Deltaf$ and the Lipschitz assumption assumed for $\nabla^2 \Obj$.  
 \end{proof}

\subsection{Stability of the mean flow}    
Assumption (A4) imposes stability assumptions on the \textit{gradient flow}.   These carry over to the mean flow \eqref{e:meanflow} by application of  \Cref{t:Lip_qSGD}.    In particular, to establish limits such as in \eqref{e:BigSmallTargetBias}, theory from \cite{borchedevkonmey25} requires that the 
mean flow admit a linearization that is exponentially asymptotically stable. 
This along with exponential stability of the mean flow is established in the following.
\begin{proposition}
\label[proposition]{t:A*}
Under Assumptions (A1)--(A4) there is  $\upepsilon_\sbullet^a>0$ such that the following hold 
for any  $\upepsilon_\sbullet \in (0, \upepsilon_\sbullet^a]$.
\whamrm{(i)} The mean flow \eqref{e:meanflow} is exponentially asymptotically stable, with stationary point $\theta^*$.

\whamrm{(ii)}  
$A^*\eqdef \partial \barf\, (\theta^*)$ is a Hurwitz matrix.
 \end{proposition}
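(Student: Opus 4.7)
The approach is to treat the mean flow $\ddt \odestate_t = \barf(\odestate_t)$ as a small perturbation of the preconditioned gradient flow $\ddt \odestate_t = -\Sigmaqsa \nabla \Obj(\odestate_t)$, using the decomposition $\barf = -\Sigmaqsa \nabla \Obj + \Deltabarf$ from \Cref{t:Lip_qSGD}. The matrix $\Sigmaqsa$ is symmetric positive definite under either sampling scheme---equal to $\Sigma_W$ under (A3i) and $2\zsigma^2 \Sigma_W$ under (A3ii). Combining \eqref{e:barbf_bdds} with the explicit form \eqref{e:upepsy} and the at-most-quadratic growth of $\Obj$ implied by (A2), one extracts a global bound $\|\Deltabarf(\theta)\| \le b_0 \upepsilon_\sbullet(1+\|\theta\|)$, so $\Deltabarf$ carries a small coefficient $O(\upepsilon_\sbullet)$ but grows at most linearly in $\|\theta\|$.

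The next step is to show that the unperturbed preconditioned flow is globally EAS at $\theta^\opt$. By a converse Lyapunov theorem applied to the GEAS gradient flow of (A4), one obtains $V \asymp \|\theta - \theta^\opt\|^2$ with $\nabla V^\transpose (-\nabla \Obj) \le -c V$ and $\|\nabla V\| \le c'\|\theta - \theta^\opt\|$; from this I would deduce $\nabla V^\transpose (-\Sigmaqsa \nabla \Obj) \le -c'' V$ with $c''>0$ depending on the spectrum of $\Sigmaqsa$. Existence and uniqueness of the perturbed equilibrium $\theta^*$ near $\theta^\opt$ then follow from the implicit function theorem applied to $F(\theta,\upepsilon_\sbullet) \eqdef \barf(\theta)$, extended to $\upepsilon_\sbullet = 0$ via the representation \eqref{e:meanflow_fTaylor}: at $\upepsilon_\sbullet=0$ we have $F(\theta^\opt,0)=0$, and the Jacobian $-\Sigmaqsa \nabla^2 \Obj(\theta^\opt)$ is invertible because a product of two symmetric positive definite matrices has positive real spectrum. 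For part~(i), reinstating $\Deltabarf$ in the Lyapunov computation along the mean flow gives $\dot V \le -c''V + \nabla V^\transpose \Deltabarf$, and the cross term is bounded by $C\upepsilon_\sbullet(1+V)$ using the growth bound above. For $\upepsilon_\sbullet$ small this yields $\dot V \le -\tfrac{1}{2}c''V + O(\upepsilon_\sbullet)$, i.e.\ ultimate boundedness of trajectories in an $O(\upepsilon_\sbullet)$ neighborhood of $\theta^\opt$; combined with local EAS near $\theta^*$ this upgrades to global EAS at $\theta^*$.

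For part~(ii), I would differentiate: $A^* = -\Sigmaqsa \nabla^2 \Obj(\theta^*) + \partial \Deltabarf(\theta^*)$. Continuity of $\nabla^2 \Obj$ and $\theta^* \to \theta^\opt$ as $\upepsilon_\sbullet \to 0$ imply the first matrix converges to the Hurwitz matrix $-\Sigmaqsa \nabla^2 \Obj(\theta^\opt)$. Differentiating the integral representation of $\Deltaf$ from the proof of \Cref{t:Lip_qSGD} shows $\partial \Deltabarf(\theta^*) = O(\upepsilon_\sbullet^2)$ near $\theta^\opt$, and continuity of eigenvalues under matrix perturbation then yields $A^*$ Hurwitz on $(0,\upepsilon_\sbullet^a]$. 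The principal obstacle is the global portion of (i): since $\Deltabarf$ grows linearly in $\|\theta\|$---matching the linear lower bound on the drift guaranteed by the coercivity in (A2)---the dominance of the Lyapunov decay over the perturbation must be controlled quantitatively, with $\upepsilon_\sbullet^a$ depending jointly on $\delta$, the spectrum of $\Sigmaqsa$, and the Lipschitz constants underlying $V$. A further subtlety is that standard converse Lyapunov theorems produce a $V$ aligned with $-\nabla \Obj$ rather than with $-\Sigmaqsa \nabla \Obj$, so transferring the exponential decay inequality through the preconditioner requires careful bookkeeping of the spectral constants.
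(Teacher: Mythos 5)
Your proposal follows essentially the same route as the paper's proof: both read off from \Cref{t:Lip_qSGD} that $\barf = -\Sigmaqsa \nabla\Obj + \Deltabarf$ with $\Deltabarf$ carrying an $O(\upepsilon_\sbullet)$ coefficient, and conclude by a Lyapunov/perturbation argument plus linearization at $\theta^*$. The paper compresses part~(i) into one sentence citing a perturbation lemma of Khalil, and part~(ii) into the identity $A^* = -\Sigmaqsa\nabla^2\Obj\,(\theta^\opt)+O(\upepsilon_\sbullet)$; your implicit-function-theorem step for $\theta^*$, the observation that a product of two symmetric positive definite matrices has positive real spectrum, and the growth bound $\|\Deltabarf(\theta)\|\le b_0\upepsilon_\sbullet(1+\|\theta\|)$ forced by the state-dependent gain are correct and are useful elaborations.

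The one step that does not follow as written is the claimed deduction $\nabla V^\transpose(-\Sigmaqsa\nabla\Obj)\le -c''V$ from the converse-Lyapunov properties of a $V$ built for the \emph{unpreconditioned} flow. Writing $-\Sigmaqsa\nabla\Obj = -\nabla\Obj + (I-\Sigmaqsa)\nabla\Obj$ and using $\|\nabla V\|\le c'\|\theta-\theta^\opt\|$ together with Lipschitz continuity of $\nabla\Obj$, the correction term is of order $\|\theta-\theta^\opt\|^2$ --- the same order as the decay term --- with a constant proportional to $\|I-\Sigmaqsa\|$, which is not small in general (in the paper's own experiments $\Sigmaqsa=1/3$). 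No bookkeeping of spectral constants rescues this inequality; one needs a Lyapunov function adapted to the preconditioned flow. The natural candidate is $\Obj-\Obj(\theta^\opt)$ itself, whose derivative along the preconditioned flow equals $-\nabla\Obj^\transpose\Sigmaqsa\nabla\Obj\le -\lambdamin(\Sigmaqsa)\|\nabla\Obj\|^2$, combined with the quadratic bounds and gradient-domination that (A2), (A4) and $\nabla^2\Obj\,(\theta^\opt)>0$ supply. To be fair, the paper's own proof has the identical soft spot: it calls the mean flow ``a small Lipschitz perturbation of the gradient flow,'' silently identifying the gradient flow with its $\Sigmaqsa$-preconditioned version. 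Once that is repaired, the rest of your argument (ultimate boundedness from $\dot V\le -\tfrac12 c''V+O(\upepsilon_\sbullet)$, upgrade to global exponential stability via the local analysis at $\theta^*$, and Hurwitz-ness of $A^*$ by continuity of eigenvalues) goes through.
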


\begin{proof}
Part (i) is immediate from \Cref{t:Lip_qSGD} 
combined with \cite[Lemma 5.1]{kha02};
the representation \eqref{e:meanflow_fTaylor} 
combined with the bounds
\eqref{e:barbf_bdds}
implies that the mean flow vector field is a small Lipshcitz perturbation of the gradient flow for small $\upepsilon_\sbullet$.

Part (ii)  follows from \Cref{t:Lip_qSGD} which gives $A^* = -    \Sigmaqsa   \nabla^2   \Obj\, (\theta^\opt )  +O(\upepsilon_\sbullet)$.   
\end{proof}

\subsection{Geometric ergodicity} 
Another key assumption in \cite{borchedevkonmey25} is that $\bfPhi$ is a geometrically ergodic Markov chain on a general state space $\state$ with invariant measure $\uppi$, and satisfying  \textbf{(DV3)}:
\\
For  functions $V\colon\state\to\Re_+$,  $ W\colon\state\to [1, \infty)$, 
a small set $C$, $b>0$ and all $x\in\state$,
\begin{equation}
\begin{aligned}
\Expect\bigl[  \exp\bigr(  V(\Phi_{k+1})      \bigr)& \mid \Phi_k=x \bigr]  
\\
&
\le  \exp\bigr(  V(x)  - W(x) +  b \ind_C(x)  \bigl)
\end{aligned}
\label{e:DV3}
\end{equation}
The following conditions are also assumed:
\[\begin{aligned}
&S_W(r)  := \{ x :  W(x)\le r \} \quad \text{is either small or empty}
\\
 &\sup\{ V(x) :  x\in S_W(r) \}  <\infty \quad \text{for each $r\ge 1$ and} 
\\
&\lim_{r \to\infty} \sup_{x \in \state} \frac{L(x)}{\max\{r,W(x)  \}}  =0
\end{aligned}
\]
with $L$ the (state dependent) Lipchitz constant for $f$.  
 \Cref{t:Lip_qSGD} tells us that $L$ can be chosen independent of $x$.

The reader is referred to \cite{MT} for a proper definition of a small set.

In \Cref{t:DV3holds}, it is shown that any of the exploration choices imposed in (A3) satisfy the above conditions. 
We first establish a simpler property known as uniform ergodicity---we again refer to \cite{MT} or
\cite{borchedevkonmey25}
 for definitions.

\begin{lemma}
\label[proposition]{t:smallset}    Suppose that (A3ii) holds. Then,  $\bfPhi\eqdef \{ \Phi_n = (W_n,W_{n-1})^\transpose: n\geq 1  \}$ is a uniformly ergodic Markov chain on $\state=\Re^d\times\Re^d$.  Consequently, every measurable subset of $\state $ is small.
 \end{lemma}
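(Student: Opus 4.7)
The plan is to exploit the explicit Markov structure of $\bfPhi$ inherited from (A3ii). Since $\{W_n\}$ is i.i.d.\ with common distribution $\mu_W$ supported on a compact set, the one-step transition kernel of $\Phi_n = (W_n, W_{n-1})^\transpose$ is
\begin{equation*}
P\bigl((w_1,w_2),\, A_1 \times A_2\bigr) \;=\; \mu_W(A_1)\,\ind_{A_2}(w_1),
\end{equation*}
that is, given $\Phi_n = (w_1, w_2)$, the next state has first coordinate freshly drawn from $\mu_W$ and second coordinate equal to $w_1$.

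The single decisive observation is that two steps suffice to forget the initial condition entirely: at time $n+2$, both $W_{n+2}$ and $W_{n+1}$ are fresh i.i.d.\ samples independent of $\Phi_n$, so
\begin{equation*}
P^2\bigl((w_1,w_2),\, A_1 \times A_2\bigr) \;=\; \mu_W(A_1)\,\mu_W(A_2) \;=\; \pi(A_1 \times A_2),
\end{equation*}
where $\pi \eqdef \mu_W \otimes \mu_W$, uniformly in $(w_1,w_2)$. This both identifies $\pi$ as the unique invariant law of $\bfPhi$ and yields
\begin{equation*}
\sup_{\phi\in\state} \bigl\| P^n(\phi,\cdot) - \pi\bigr\|_{\textup{TV}} \;=\; 0, \qquad n \ge 2,
\end{equation*}
which is a uniform geometric rate in the strongest possible sense, establishing uniform ergodicity.

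For the claim regarding small sets, the same identity gives the minorization $P^2(\phi, A) \ge 1 \cdot \pi(A)$ for every $\phi \in \state$ and every measurable $A \subseteq \state$. In the terminology of \cite{MT}, this means $\state$ itself is a small set (with $m = 2$, minorizing measure $\pi$); a fortiori, every measurable subset of $\state$ is small. There is no genuine obstacle here beyond recognizing that two iterations completely decouple the chain from its initial state, at which point all three conclusions reduce to the trivial minorization above.
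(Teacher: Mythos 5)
Your proof is correct and takes essentially the same route as the paper: the paper's proof is the one-line observation that $\Prob(\Phi_n\in A\mid \Phi_0=z)=\uppi(A)$ for all $n\ge 2$ and all $z$, which is exactly your two-step decoupling identity $P^2(\phi,\cdot)=\pi$. You simply spell out the transition kernel and make the minorization (whole space small with $m=2$, constant $1$, measure $\pi$) explicit, which the paper leaves implicit.
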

 
\begin{proof}
For $n\geq 2$ and any measurable subset $A \subseteq X$, 
\[
\Prob(\Phi_n \in A \mid \Phi_0 = z )  = \uppi(A)  \, , \quad z \in \Re^d \times \Re^d
\]
which is far stronger than uniform ergodicity.
\end{proof}

The log moment generating function for $W_n$ is finite valued under (A3).    
Denote
 $\Lambda(\delta) \eqdef \log(\Expect[ \exp( \delta  \| W_n \| ) ]) $ for $\delta\in\Re$.
 We leave the proof of \Cref{t:DV3holds} to the reader: 
 for each part, it is straightforward to show that  \eqref{e:DV3} holds with equality.

 \begin{lemma}
\label[proposition]{t:DV3holds}    Suppose that  the assumptions of \Cref{t:Lip_qSGD} hold.
Then for any fixed $\delta_0>0$,   and with $b =\Lambda(\delta_0)+1$,   $C = \state$,

\whamrm{(i)} If (A3i) holds, the  Markov chain  $\{\Phi_n = W_n \}$ satisfies \eqref{e:DV3} with $V(x) =  \delta_0 \|  x \| $ and   $W(x) =  1 + V(x)$.

\whamrm{(ii)} If (A3ii) holds, the  Markov chain  $\{\Phi_n = (W_{n-1};W_n)^\transpose \}$ satisfies \eqref{e:DV3} with $V(w,w') = \delta_0[ \half \| w \| +  \| w' \| ] $ and 
\[
 W(w,w') =  1 + \half\delta_0[ \| w \| + \| w' \| ] 
\]
\qed
\end{lemma}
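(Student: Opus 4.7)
The plan is to verify \eqref{e:DV3} by direct computation in both cases, exploiting the fact that the choices of $V$, $W$ and $b$ are tailored so that equality holds. Since $C=\state$, the right-hand side of \eqref{e:DV3} simplifies to $\exp(V(x) - W(x) + b)$, and the computation reduces to matching a one-step exponential moment of $\|W_{k+1}\|$, which by definition equals $\exp(\Lambda(\delta_0))$.

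For case (i), since $\{\Phi_n = W_n\}$ is i.i.d., the conditional expectation on the left of \eqref{e:DV3} is independent of $x$: $\Expect[\exp(\delta_0\|W_{k+1}\|)\mid \Phi_k = x] = \exp(\Lambda(\delta_0))$. Substituting $W(x) = 1+V(x)$ and $b = \Lambda(\delta_0)+1$ gives $V(x) - W(x) + b = \Lambda(\delta_0)$ on the right. Equality follows. For case (ii), the shift structure gives $\Phi_{k+1} = (w'; W_{k+1})^\transpose$ conditional on $\Phi_k = (w,w')^\transpose$, so
\[
\Expect\bigl[\exp(V(\Phi_{k+1})) \mid \Phi_k=(w,w')^\transpose\bigr] = \exp\bigl(\tfrac12 \delta_0 \|w'\| + \Lambda(\delta_0)\bigr).
\]
A short algebraic check gives $V(w,w') - W(w,w') + b = \half \delta_0 \|w'\| + \Lambda(\delta_0)$, matching the left side. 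The asymmetric weighting $(\half,1)$ in $V$ against $(\half,\half)$ in $W$ is engineered precisely for this match: the $\|w\|$ contribution cancels, while the $\|w'\|$ contribution reappears in $\Phi_{k+1}$ with weight $\half$ and is produced correctly.

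After \eqref{e:DV3} itself, the auxiliary conditions require a brief check. Each sublevel set $S_W(r)$ is a norm ball in $\state$ (hence bounded), and $V$ is continuous, so $\sup\{V(x) : x \in S_W(r)\} < \infty$ is immediate. Smallness of $S_W(r)$ is trivial in case (ii) by \Cref{t:smallset}; in case (i) an analogous one-step argument applies, since the chain reaches the compact support of $W$ at every step regardless of the initial state, making every measurable subset of $\state$ small. Finally, the ratio condition $L(x)/\max\{r,W(x)\} \to 0$ as $r \to \infty$ follows at once from \Cref{t:Lip_qSGD}, which provides a state-independent Lipschitz constant $L$ while $W$ is coercive.

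I do not anticipate a genuine obstacle: the entire argument is bookkeeping, with the only mild subtlety being the bookkeeping of the shift in case (ii) and the correct choice of the fractional coefficient in $V$ so that the drift equation closes with equality.
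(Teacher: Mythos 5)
Your proposal is correct and matches the paper's intended argument: the paper explicitly leaves this proof to the reader with the hint that \eqref{e:DV3} holds with equality for each choice of $V$, $W$, $b$, $C$, and your direct computation verifies exactly that, including the correct handling of the shift structure in case (ii). The additional checks of the auxiliary conditions (smallness of sublevel sets, boundedness of $V$ on them, and the Lipschitz ratio condition) go slightly beyond what the lemma itself asserts but are consistent with the surrounding discussion in the paper.
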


\begin{subequations}

\subsection{Variance analysis} 
 Consider any two functions   $g_1,g_2: \state \to \Re^d$  that may be real- or vector-valued. If each satisfies $\|g_i(x)\|^2 \le \exp(V(x))$ for each $x\in \state$ and $i = \{1,2\}$, their asymptotic covariance matrices $\SigmaCLT^{g_1},\SigmaCLT^{g_2}$ and cross covariance matrix $\SigmaCLT^{g_1,g_2}$ are given by 
\begin{align}
\!\!\!\!\! {}
&
\SigmaCLT^{g_i}  =   \sum_{k=-\infty}^{\infty} \Expect_\uppi[\tilg_i(\Phi_0) {\tilg_i(\Phi_k)}^\transpose] 
\\
&
\SigmaCLT^{g_1,g_2}   = \sum_{k=-\infty}^{\infty} \Expect_\uppi[\tilg_1(\Phi_0) {\tilg_2(\Phi_k)}^\transpose] 
\label{e:hollandCLT}
\end{align}
where $\tilg_i(x) = g_i(x) - \int g_i(x) \uppi(x)$ for $i = \{1,2\}$. 
We have for any pair of functions,
\begin{equation}
\SigmaCLT^{g_1+g_2} =  \SigmaCLT^{g_1} + \SigmaCLT^{g_1,g_2} +\SigmaCLT^{g_2,g_1}  +\SigmaCLT^{g_2}
\label{e:CLTsum}
\end{equation}
And,  under the assumptions to be imposed with have 
\[
\SigmaCLT^{g_i}  = \lim_{N \to \infty }    \frac{1}{N}    \Cov(S^{g_i}_N )
\]
where $S^{g_i}_N = \sum_{k=0}^N  g_i(\Phi_k) $ 
\end{subequations}

After explaining how the assumptions of the present paper satisfy the conditions imposed in \cite{borchedevkonmey25}, we turn to two results that will serve as foundation to the proof of \Cref{t:1SPSAcov}. The first conclusion in the following is \cite[Thm. 5]{borchedevkonmey25}, while the second is a step in its proof.

\begin{lemma}
\label[lemma]{t:Asympt_target}  
Suppose (A1)--(A4) hold. Then, the following limits hold:
\begin{romannum}
\item $\displaystyle \lim_{N \to \infty } N \Cov\big(\thetaPR_N \big) \eqdef  \SigmaPR = G\SigmaCLT^{\Delta^*}G^\transpose$

\item $\displaystyle \lim_{N \to \infty } N \Cov\big(\upbeta_N^{\barf}\big) = \SigmaCLT^{\Delta^*}$
\end{romannum}
where $G = -[\partial_\theta \barf(\theta^*)]^{-1}$ and $\SigmaCLT^{\Delta^*}$ is the asymptotic covariance of $\bfDelta^* \eqdef \{\Delta_n^* = f(\theta^*,\Phi_{n}):  n \geq 1\}$.
\qed
\end{lemma}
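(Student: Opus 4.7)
The plan is to apply Theorem~5 of \cite{borchedevkonmey25} after checking that each of its hypotheses holds in the present setting. Global Lipschitz continuity of $f(\cdot,\qsaprobe)$ is supplied by \Cref{t:Lip_qSGD}; exponential asymptotic stability of the mean flow together with the Hurwitz property of $A^* = \partial_\theta \barf(\theta^*)$ is established in \Cref{t:A*}; geometric ergodicity of $\bfPhi$ through the drift condition \eqref{e:DV3} is verified in \Cref{t:smallset} and \Cref{t:DV3holds}; and the step-size rule (A1) places $\rho\in(1/2,1)$ in the Polyak--Ruppert regime. With these inputs in hand, Theorem~5 applies and yields part (i) as stated.

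For part (ii) I would exploit a Taylor expansion of $\barf$ about $\theta^*$. Since $\barf(\theta^*)=0$ and $\barf\in C^2$ with Lipschitz derivative (inherited from (A2) through \Cref{t:Lip_qSGD}), one has
\begin{equation*}
\barf(\theta_n) \,=\, A^*(\theta_n - \theta^*) + R_n,\qquad \|R_n\|\le c\,\|\theta_n-\theta^*\|^2,
\end{equation*}
for a constant $c$ independent of $n$. Averaging over $n=0,\ldots,N-1$,
\begin{equation*}
\upbeta_N^{\barf} \,=\, A^*\bigl(\thetaPR_N - \theta^*\bigr) \,+\, \frac{1}{N}\sum_{n=0}^{N-1} R_n.
\end{equation*}
The finite-sample mean-square rate $\Expect[\|\theta_n-\theta^*\|^2] = O(\alpha_n) = O(n^{-\rho})$ that underlies the CLT in \cite{borchedevkonmey25}, together with a matching fourth-moment bound from the same source, gives $\Expect[\|R_n\|^2] = O(n^{-2\rho})$. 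A Cauchy--Schwarz estimate then yields $N\,\Expect\bigl[\|N^{-1}\sum_n R_n\|^2\bigr] = O(N^{-2\rho+1})\to 0$, since $\rho>1/2$. Hence the remainder is asymptotically negligible at the $\sqrt{N}$ scale, and
\begin{equation*}
\lim_{N\to\infty} N\,\Cov\bigl(\upbeta_N^{\barf}\bigr) \,=\, A^*\,\SigmaPR\,A^{*\transpose}.
\end{equation*}
The identities $A^*G = -I$ and $G^\transpose A^{*\transpose} = -I$, combined with part (i), collapse the right-hand side to $\SigmaCLT^{\Delta^*}$, establishing (ii).

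The principal obstacle is upgrading distributional convergence into a genuine limit of second moments, which demands uniform integrability of $N\|\thetaPR_N-\theta^*\|^2$ and $N\|\upbeta_N^{\barf}\|^2$ and hence $L^{2+\delta}$ control on the estimation error. This is precisely what the exponential drift condition \eqref{e:DV3} of \Cref{t:DV3holds}, paired with the uniform Lipschitz bound from \Cref{t:Lip_qSGD}, supplies; these inputs are already folded into the hypotheses of Theorem~5, so beyond the verification performed in \Cref{t:Lip_qSGD,t:A*,t:smallset,t:DV3holds} no additional probabilistic machinery is required.
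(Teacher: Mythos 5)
Your proposal is correct, and for part (i) it coincides exactly with what the paper does: the paper's entire ``proof'' of this lemma is the remark that (i) \emph{is} \cite[Thm.~5]{borchedevkonmey25} and (ii) is a step inside that theorem's proof, with the hypothesis verification delegated to \Cref{t:Lip_qSGD,t:A*,t:smallset,t:DV3holds} --- precisely the checklist you assemble. Where you genuinely diverge is part (ii): the paper imports it as a black box, whereas you derive it from (i) via the Taylor identity $\upbeta_N^{\barf} = A^*(\thetaPR_N - \theta^*) + N^{-1}\sum_n R_n$ and a remainder estimate $N\,\Expect[\|N^{-1}\sum_n R_n\|^2] = O(N^{1-2\rho}) \to 0$, then cancel $A^*$ against $G$. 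That computation is sound (the cross term also vanishes by Cauchy--Schwarz, which you implicitly use), and it has the virtue of making the logical link between the two limits explicit; note, though, that in \cite{borchedevkonmey25} the derivation typically runs in the opposite direction --- one first identifies the CLT for the averaged noise/target bias via the telescoped recursion $\barf(\theta_n) = \alpha_{n+1}^{-1}(\theta_{n+1}-\theta_n) - \Delta_{n+1}$ and then obtains the Polyak--Ruppert covariance in (i) from it. Your route is not more elementary in substance, since the $O(\alpha_n)$ mean-square rate, the matching fourth-moment bound, and the uniform integrability needed to pass from distributional to second-moment convergence are themselves nontrivial outputs of the same reference; so the dependence on \cite{borchedevkonmey25} is not actually reduced, only reorganized. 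As a final small caveat, the quadratic remainder bound $\|R_n\|\le c\|\theta_n-\theta^*\|^2$ needs $\partial\barf$ Lipschitz, which holds for the $-\Sigmaqsa\nabla\Obj$ part by (A2) but for the $\Deltabarf$ correction rests on the smoothness asserted in \Cref{t:Lip_qSGD}; a one-line justification there would close the argument completely.
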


Proofs of the bounds in \eqref{e:BigSmallTargetBias} require the following decomposition of  $\bfDelta^* \eqdef \{\Delta_n^* = f(\theta^*,\Phi_{n}):  n \geq 1\}$  (recall \Cref{t:Asympt_target}).

\begin{lemma}
\label[lemma]{t:Delta_rep}
Under the assumptions of \Cref{t:Lip_qSGD}, the following representation holds: $\Delta^*_{n+1}  =  \upnu_{n+1} + \upomega_{n+1} + \uppsi_{n+1}$, in which
\[
\begin{aligned}
\upnu_{n+1} & = -   \frac{1}{ \upepsilon_\sbullet}       \qsaprobe_{n+1} \Obj\, (\theta^* )
\\
\upomega_{n+1} &= [   \Sigmaqsa    -     \qsaprobe_{n+1} \qsaprobe_{n+1}^\transpose ]  \nabla   \Obj\, (\theta^* ) 
\\
\uppsi_{n+1} & = - \half \upepsilon_\sbullet    \qsaprobe_{n+1}^\transpose   \nabla^2   \Obj\, (\theta_n )   \qsaprobe_{n+1} \qsaprobe_{n+1}^\transpose  
+   \tilDeltaf(\theta^*,\qsaprobe_{n+1})
\end{aligned}
\label{e:Delta1SPSA}
\]
with $\tilDeltaf(\theta^*,\qsaprobe_{n+1}) \eqdef \Deltaf(\theta^*,\qsaprobe_{n+1}) -  \Deltabarf(\theta^*) $.
\qed
\end{lemma}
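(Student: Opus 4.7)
The plan is to derive the decomposition by straightforward algebraic manipulation of the Taylor-style expansion for $f$ furnished by \Cref{t:Lip_qSGD}. Since $\Delta^*_{n+1}=f(\theta^*,\Phi_{n+1})$, the entire content is in identifying how the terms on the right of \eqref{e:f_Taylor}, once evaluated at $\theta=\theta^*$, regroup into the claimed $\upnu_{n+1},\upomega_{n+1},\uppsi_{n+1}$. No new analytic input is required.

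First I would specialize \eqref{e:f_Taylor} to $\theta=\theta^*$, expressing $f(\theta^*,\Phi_{n+1})$ as the sum of a probe-proportional zeroth-order term, a $\qsaprobe\qsaprobe^\transpose$-weighted gradient term, a Hessian-based third-order term in the probe, and the smooth remainder $\Deltaf(\theta^*,\qsaprobe_{n+1})$. The only additional ingredient is the stationarity identity $\barf(\theta^*)=0$, which combined with \eqref{e:meanflow_fTaylor} yields $\Sigmaqsa\,\nabla\Obj(\theta^*)=\Deltabarf(\theta^*)$.

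Second, I would add and subtract $\Sigmaqsa\nabla\Obj(\theta^*)$ inside the expression for $f(\theta^*,\Phi_{n+1})$. This produces the centered term $\upomega_{n+1}=[\Sigmaqsa-\qsaprobe_{n+1}\qsaprobe_{n+1}^\transpose]\nabla\Obj(\theta^*)$ together with a residual $-\Sigmaqsa\nabla\Obj(\theta^*)=-\Deltabarf(\theta^*)$, which merges with $\Deltaf(\theta^*,\qsaprobe_{n+1})$ to form $\tilDeltaf(\theta^*,\qsaprobe_{n+1})$ as defined in the statement. What remain are precisely the zeroth-order and Hessian pieces, repackaged as $\upnu_{n+1}$ and the leading part of $\uppsi_{n+1}$.

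There is no substantive obstacle: the lemma is essentially a reorganization of \Cref{t:Lip_qSGD} that isolates the three pieces by their order in $\upepsilon_\sbullet$ and their centering properties. Its payoff, which motivates the regrouping but is not part of the proof itself, lies downstream: $\upnu_{n+1}$ will be the dominant $O(1/\upepsilon_\sbullet)$ term whose covariance drives the lower bound \eqref{e:IID_var} in the i.i.d.\ case, whereas $\uppsi_{n+1}$ is $O(\upepsilon_\sbullet)$. The centering properties $\Expect_\uppi[\upnu_{n+1}]=0$ and $\Expect_\uppi[\upomega_{n+1}]=0$ — immediate from $\Expect[\qsaprobe]=0$ and the definition of $\Sigmaqsa$ under either (A3i) or (A3ii) — will be needed when applying \Cref{t:Asympt_target} in the proof of \Cref{t:1SPSAcov}, but are incidental to the lemma itself.
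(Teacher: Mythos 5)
Your proposal is correct and matches the paper's intent exactly: the paper states this lemma without an explicit proof, treating it as an immediate regrouping of the expansion \eqref{e:f_Taylor} at $\theta=\theta^*$, and your key observation that $\barf(\theta^*)=0$ together with \eqref{e:meanflow_fTaylor} gives $\Sigmaqsa\nabla\Obj(\theta^*)=\Deltabarf(\theta^*)$, so that the added-and-subtracted term merges with $\Deltaf(\theta^*,\qsaprobe_{n+1})$ to produce $\tilDeltaf(\theta^*,\qsaprobe_{n+1})$, is precisely the one step the argument needs. Nothing is missing.
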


The next lemma implies that $\{\upomega_n\}$  has small impact on the asymptotic covariance of  $\{\Delta^*_n \}$.   

\begin{lemma}
\label[lemma]{t:cov_beta}  
Suppose (A1)--(A4) hold. 
Then, there is a constant $\bdd{t:cov_beta}$ such that 
\[
\trace(\SigmaCLT^{\upomega}) \leq \bdd{t:cov_beta}    \|   \nabla \Obj(\theta^*)  \|^2
\]
\end{lemma}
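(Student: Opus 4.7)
The plan is to exploit the product structure $\upomega_{n+1} = M_{n+1} v$, in which $v \eqdef \nabla \Obj(\theta^*) \in \Re^d$ is deterministic and $M_{n+1} \eqdef \Sigmaqsa - \qsaprobe_{n+1}\qsaprobe_{n+1}^\transpose$ is a zero-mean random matrix that depends only on the exploration. This factorization lets me pull the entire dependence on the objective outside the covariance computation and reduce the problem to bounded moments of $\bfqsaprobe$.

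First, using the definition \eqref{e:hollandCLT} together with $\Expect_\uppi[\upomega_n] = 0$ and the cyclic property of the trace, I would write
\[
\trace(\SigmaCLT^{\upomega}) \;=\; \sum_{k=-\infty}^{\infty} \trace\bigl(\Expect_\uppi[M_0 v v^\transpose M_k^\transpose]\bigr) \;=\; v^\transpose \Bigl(\sum_{k=-\infty}^{\infty} \Expect_\uppi[M_k^\transpose M_0]\Bigr) v.
\]
The key point is that the matrix in parentheses depends only on the law of $\bfqsaprobe$; it contains no information about the objective or the optimum $\theta^*$.

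Next, I would argue that this series collapses to a finite sum by finite-range dependence. Under (A3i), $\{M_n\}$ is i.i.d.\ and centered, so $\Expect_\uppi[M_k^\transpose M_0] = 0$ for every $k \neq 0$, leaving a single term. Under (A3ii), the matrix $M_n$ is a function of $(W_{n-1}, W_n)$ alone; hence $M_0$ and $M_k$ are independent whenever $|k| \ge 2$, and only the three lags $k \in \{-1, 0, 1\}$ contribute. In either case the infinite sum reduces to at most three expectations.

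Finally, I would bound each surviving term in operator norm. Since $\bfmW$ takes values in a compact set by (A3), the random variables $\qsaprobe_n$ are bounded and hence possess moments of all orders; in particular $\Expect_\uppi[\|M_n\|^2]$ is finite and independent of $\theta^*$. Combining the previous display with $|v^\transpose A v| \le \|A\|\, \|v\|^2$ yields a constant $\bdd{t:cov_beta}$, depending only on the law of $\bfmW$, such that $\trace(\SigmaCLT^{\upomega}) \le \bdd{t:cov_beta}\,\|\nabla\Obj(\theta^*)\|^2$. I do not anticipate a serious obstacle: the entire argument hinges on the single observation that $\upomega_n$ depends on $\theta^*$ only through its linear action on the deterministic vector $\nabla\Obj(\theta^*)$, which decouples the covariance into an exploration-only matrix multiplied by $\|\nabla\Obj(\theta^*)\|^2$.
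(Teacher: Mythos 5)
Your proposal is correct and follows essentially the same route as the paper: both exploit that $\upomega_n$ acts on the deterministic vector $\nabla\Obj(\theta^*)$ through a zero-mean, bounded matrix depending only on the exploration, observe that the lag covariances vanish for $|k|>1$ (trivially in the i.i.d.\ case, by finite-range dependence of $(W_{n-1},W_n)$ in the zig-zag case), and bound the three surviving terms by moments of the compactly supported $\bfmW$. Your use of trace cyclicity to factor out $\|\nabla\Obj(\theta^*)\|^2$ is a slightly cleaner packaging of the same estimate.
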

\begin{proof}
This is obvious in the case of (A3i)  (i.i.d.\ exploration).

For (A3ii) we must consider the cross-covariance.
The definition of $\{\upomega_n\}$ in \Cref{t:Delta_rep} implies the following identity,
\[\begin{aligned}
\Expect[\upomega_0 \upomega^\transpose_n ] &= \Expect[(\qsaprobe_0\qsaprobe^\transpose_0 - \Sigmaqsa) \nabla\Obj(\theta^*) \nabla\Obj^\transpose(\theta^*)(\qsaprobe_n\qsaprobe^\transpose_n - \Sigmaqsa)] 
\\
& = 0 \text{ for $|n|>1$}
\end{aligned}
\]

In view of the definition of the asymptotic covariance matrix in \eqref{e:hollandCLT}, the above equation yields 
$
\SigmaCLT^{\upomega} = \sum_{n=-1}^1 \Expect[\upomega_0 \upomega^\transpose_n ]
$ for which the following upper bound holds: $\trace(\SigmaCLT^{\upomega}) \leq 3\, \trace(\Expect[\upomega_0 \upomega^\transpose_0 ])$. 
The  definition of $\{\upomega_n\}$ in \Cref{t:Delta_rep} implies the upper bound
\[
\trace(\Expect[\upomega_0 \upomega^\transpose_0 ]) \leq \trace(\Expect[\qsaprobe_0\qsaprobe^\transpose_0 \|\qsaprobe_0 \|^2] - \Sigmaqsa) \| \nabla\Obj(\theta^*)\|^2
\]
which completes the proof. %

\end{proof}

\subsection{Proofs of the main results}
\label{s:Proofs!}

\begin{proof}[Proof of \Cref{t:1SPSAconv}]
Part (i) follows from \cite[Thm. 4]{borchedevkonmey25} with $\upepsilon_\sbullet^0 = \min\{ \upepsilon_\sbullet^a,\upepsilon_\sbullet^b \}$. Part (ii) follows (i) along with \Cref{t:Lip_qSGD} 
\end{proof}

\begin{proof}[Proof of  \Cref{t:1SPSAcov}]  
The proof of (i) begins with consideration of the representation in \Cref{t:Delta_rep}. Then, taking covariances of both sides yields
\[
\SigmaCLT^{\Delta^*} =\SigmaCLT^{\upnu} +  \SigmaCLT^{\upomega + \uppsi} +  \SigmaCLT^{\upnu,\upomega + \uppsi}  + \SigmaCLT^{\upomega + \uppsi,\upnu}
\]
Upon inspection of the definitions of $\{\upnu_n, \upomega_n,\uppsi_n\}$ in \Cref{t:Delta_rep}, it follows that the  term  $\SigmaCLT^{\upnu}$ dominates the asymptotic variance. This, along with  \Cref{t:Asympt_target}~(ii), completes the proof of (i).

 Under zig-zag exploration, the sequence $\bfqsaprobe$ is telescoping. The fact that telescoping sequences have zero asymptotic covariance, along with the representation in \Cref{t:Delta_rep},  justifies the identity
\[
\begin{aligned}
\SigmaCLT^{\Delta^*} = \SigmaCLT^{\Delta^* - \upnu} &= \SigmaCLT^{\upomega + \uppsi}
\\
&=\SigmaCLT^{\upomega} +  \SigmaCLT^{\uppsi} +  \SigmaCLT^{\upomega,\uppsi}  + \SigmaCLT^{\uppsi,\upomega}
\end{aligned}
\]
in which the last equality follows from \eqref{e:CLTsum}. 

The right hand side of the above representation is bounded in $\upepsilon_\sbullet$.
Moreover, the Cauchy-Schwarz inequality implies the bound
\[
\trace(\SigmaCLT^{\upomega,\uppsi}  + \SigmaCLT^{\uppsi,\upomega}) \leq 2 \sqrt{\trace(\SigmaCLT^{\upomega})} \sqrt{\trace(\SigmaCLT^{\uppsi})}
\]

In view of the definition of $\{ \psi_n \}$ in \Cref{t:Delta_rep}, it follows that $\trace(\SigmaCLT^{\uppsi} ) = O(\upepsilon_\sbullet^2)$. 

It remains to bound $\SigmaCLT^{\upomega}$. The approximation in \eqref{e:meanflow_fTaylor} and the upper bound in \eqref{e:barbf_bdds} imply that $\| \nabla \Obj(\theta^*) \|^2 =O(\upepsilon_\sbullet^4) $, which in combination with \Cref{t:cov_beta}, yields $\trace(\SigmaCLT^{\upomega} ) = O(\upepsilon_\sbullet^4)$. This combined with \Cref{t:Asympt_target}~(ii) completes the proof of (ii).   
\end{proof}

 \def\urls#1{{\scriptsize\url{#1}}}

	\clearpage

 \bibliographystyle{abbrv}
\bibliography{strings,markov,q,QSA,bandits}

  \end{document}